\definecolor {processblue}{cmyk}{0.96,0,0,0}
\newtheorem{thm}{Theorem}[section]
\theoremstyle{definition}
\newtheorem{prop}[thm]{Proposition}
\newtheorem{defn}[thm]{Definition}
\newtheorem{lem}[thm]{Lemma}
\newtheorem{rem}[thm]{Remark}
\newtheorem{ex}[thm]{Example}
\numberwithin{equation}{section}
\begin{document}
\title[An ideal-based cozero-divisor graph of a commutative ring]
{An ideal-based cozero-divisor graph of a commutative ring}
\author{H. Ansari-Toroghy, F. Farshadifar, and F. Mahboobi-Abkenar}
\address{Department of pure Mathematics, Faculty of mathematical Sciences, University of Guilan,
P. O. Box 41335-19141, Rasht, Iran.}%
\email{ansari@guilan.ac.ir}%
\address{University of Farhangian, Tehran, Iran.}%
\email{f.farshadifar@gmail.com}%
\address{Department of pure Mathematics, Faculty of mathematical Sciences, University of Guilan, P. O. Box41335-19141, Rasht, Iran.}
\email{mahboobi@phd.guilan.ac.ir}%
\subjclass[2000]{05C75; 13A99; 05C99}%
\keywords {Zero-divisor, cozero-divisor, connected, bipartite, secondal ideal}

\begin{abstract}
Let $R$ be a commutative ring and let $I$ be an ideal of $R$. In this paper, we introduce the cozero-divisor graph $\acute{\Gamma}_I(R)$ of $R$ and obtain some related results.
\end{abstract}
\maketitle
\section{Introduction}
\noindent
Let $R$ be a commutative ring with non-zero identity and let $Z(R)$ be the set of all zero-divisors of $R$. Anderson and Livingston, in \cite{2}, introduced the \emph{zero-divisor graph of R}, denoted by $\Gamma(R)$, as the (undirected) graph with vertices $Z^*(R) = Z(R)\backslash \{0\}$ and for two distinct elements $x$ and $y$ in $Z^*(R)$, the vertices $x$ and $y$ are adjacent if and only if $xy = 0$. 

In \cite{3}, Redmond introduced the definition of the zero-divisor graph with respect to an ideal. Let $I$ be an ideal of $R$. The \emph{zero-divisor graph of $R$ with respect to $I$}, denoted by $\Gamma_I(R)$, is the graph whose vertices are the set $\{ x \in R\setminus I\, |\, xy \in I\: for\: some\: y \in R \setminus I\}$ with distinct vertices $x$ and $y$ are adjacent if and only if $xy \in I$. Thus if $I = 0$, then $\Gamma_I(R) = \Gamma(R)$, and $I$ is a non-zero prime ideal of $R$ if and only if $\Gamma_I(R) = \emptyset$.

In \cite{1}, Afkhami and Khashayarmanesh introduced the \emph{cozero-divisor graph} $\acute{\Gamma}(R)$ of $R$, in which the vertices are precisely the nonzero, non-unit elements of $R$, denoted by $W^*(R)$, and two distinct vertices $x$ and $y$ are adjacent if and only if $x \not \in yR$ and $y \not \in xR$.

Let $I$ be an ideal of $R$. In this article, we introduce and study the cozero-divisor graph $\acute{\Gamma}_I(R)$ of $R$ with vertices $\{x \in R  \setminus Ann_R(I)\: |\: xI \neq I \}$ and two distinct vertices $x$ and $y$ are adjacent if and only if $x \not \in yI$ and $y \not \in xI$. This is a generalization of cozero-divisor graph introduced in \cite{1} when $I = R$, i.e, we have $\acute{\Gamma}_R(R) = \acute{\Gamma}(R)$. Moreover, this can be regarded the dual notion of ideal-based zero-divisor graph introduced by S.P. Redmond in \cite{3}. Also we obtain some useful information about secondal ideals.

We denote the set of maximal ideals and the Jacabson radical of $R$ by $Max(R)$ and $J(R)$, respectively. In a graph $G$, the distance between two distinct vertices $a$ and $b$, denoted by $d(a,b)$ is the length of the shortest path connecting $a$ and $b$. If there is not a path between $a$ and $b$, $d(a,b) = \infty$. The \emph{diameter} of a graph $G$ is $diam(G) = sup \{d(a,b) : a\:and\: b\:are\:distinct\:vertices\:of\: G\}$. The \emph{girth} of $G$, is the length of the shortest cycle in $G$ and it is denoted by $g(G)$. If $G$ has no cycle, we define the girth of $G$ to be infinite. An \emph{r-partite graph} is one whose vertex set can be partitioned into $r$ subsets such that no edge has both ends in any one subset. A \emph{complete r-partite graph} is one each vertex is jointed to every vertex that is not in the same subset. The complete bipartite (i.e, 2-partite) graph with part sizes $m$ and $n$ is denoted by $K_{m,n}$.

\section{On the generalization of the cozero-divisor graph}
\noindent

\begin{defn}\label{2.1}
Let $R$ be a commutative ring and let $I$ be an ideal of $R$. We define \emph{cozero-divisor graph $\acute{\Gamma}_I(R)$} of $R$ with vertices $\{x \in R\setminus Ann_R(I)\: |\: xI \neq I \}$. The distinct vertices $x$ and $y$ are adjacent if and only if $x \not \in yI$ and $y \not \in xI$. Clearly, when $I = R$ we have $\acute{\Gamma}_I(R) = \acute{\Gamma}(R)$.
\end{defn}

\begin{ex}\label{2.2}
Let $R = \Bbb Z _{12}$ and $I = (\bar{3})$. Then $\Gamma_I(R)=\emptyset$. Also, in the following figures we can see the deference between the graphs $\acute{\Gamma}(R)$, $\acute{\Gamma}_I(R)$, and $\Gamma(R)$.
\begin{figure}[h!]
\centering
\caption{}
\begin{subfigure}{0.2\textwidth}
\centering
\caption{$\acute{\Gamma}(R)$.}
\begin{tikzpicture}[auto,node distance=1.2cm,
  thick,main node/.style={circle,fill=black!10,font=\sffamily\tiny\bfseries}]
\node[main node] (1) {$\bar{8}$};
\node[main node] (2) [below of=1] {$\bar{6}$};
\node[main node] (3) [below of=2] {$\bar{4}$};
\node[main node] (4) [below right of=3] {$\bar{9}$};
\node[main node] (5) [below left of=3] {$\bar{3}$};
\node[main node] (6) [below right of=5] {$\bar{10}$};
\node[main node] (7) [below of=6] {$\bar{2}$};
  \path[every node/.style={font=\sffamily\small}]
    (1) edge node [left] {} (2)
    (1) edge node [left] {} (5)
    (1) edge node [left] {} (4)
    (2) edge node [left] {} (3)
    (3) edge node [left] {} (5)
    (3) edge node [left] {} (4)
    (5) edge node [left] {} (6)
    (5) edge node [left] {} (7)
    (4) edge node [left] {} (6)
    (4) edge node [left] {} (7);
\end{tikzpicture}
\end{subfigure}
\begin{subfigure}{0.2\textwidth}
\centering
\caption{$\acute{\Gamma}_I(R)$.}
\begin{tikzpicture}[auto,node distance=1.2cm,
  thick,main node/.style={circle,fill=black!10,font=\sffamily\tiny\bfseries}]
\node[main node] (1) {$\bar{2}$};
\node[main node] (2) [below of=1] {$\bar{10}$};
\node[main node] (3) [below of=2] {$\bar{6}$};
  \path[every node/.style={font=\sffamily\small}]
    (1) edge node [left] {} (2);
\end{tikzpicture}
\end{subfigure}
\begin{subfigure}{0.2\textwidth}
\centering
\caption{$\Gamma(R)$.}
\begin{tikzpicture}[auto,node distance=1.2cm,
  thick,main node/.style={circle,fill=black!10,font=\sffamily\tiny\bfseries}]
\node[main node] (1) {$\bar{8}$};
\node[main node] (2) [below of=1] {$\bar{6}$};
\node[main node] (3) [left of=2] {$\bar{10}$};
\node[main node] (4) [left of=3] {$\bar{3}$};
\node[main node] (5) [right of=2] {$\bar{2}$};
\node[main node] (6) [right of=5] {$\bar{9}$};
\node[main node] (7) [below of=2] {$\bar{4}$};
  \path[every node/.style={font=\sffamily\small}]
    (1) edge node [left] {} (2)
    (1) edge node [left] {} (6)
    (1) edge node [left] {} (4)
    (2) edge node [left] {} (3)
    (2) edge node [left] {} (5)
    (2) edge node [left] {} (7)
    (7) edge node [left] {} (6)
    (7) edge node [left] {} (4);
\end{tikzpicture}
\end{subfigure}
\end{figure}
\end{ex}

Let $I$ be an ideal of $R$. Then $I$ is said to be a \emph{second ideal} if $I \neq 0$ and for every element $r$ of $R$ we have either $rI= 0$ or $rI= I$.

\begin{lem}\label{second}
$I$ is a second ideal of $R$ if and only if $\acute\Gamma_I(R) = \emptyset$.
\begin{proof}
Straightforward.
\end{proof}

\end{lem}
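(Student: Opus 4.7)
The proof is a direct unpacking of the definitions, so the plan is to translate both sides of the equivalence into the same condition on elements of $R$.

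First I would observe that a graph with no vertices is the same as the empty graph, so $\acute\Gamma_I(R)=\emptyset$ is equivalent to the statement that its vertex set is empty. By Definition~\ref{2.1}, the vertex set is
\[
V=\{x\in R\setminus \mathrm{Ann}_R(I) : xI\neq I\}=\{x\in R : xI\neq 0 \text{ and } xI\neq I\},
\]
so $V=\emptyset$ is equivalent to saying that for every $x\in R$, either $xI=0$ or $xI=I$. This is exactly the second-ideal condition (modulo the nontriviality requirement $I\neq 0$).

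For the forward direction, I would assume $I$ is second. Then by definition every $r\in R$ satisfies $rI=0$ (putting $r$ in $\mathrm{Ann}_R(I)$) or $rI=I$ (violating the condition $xI\neq I$). In either case $r\notin V$, so $V=\emptyset$ and hence $\acute\Gamma_I(R)=\emptyset$.

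For the reverse direction, I would assume $\acute\Gamma_I(R)=\emptyset$, hence $V=\emptyset$. Then for every $r\in R$, the failure of $r$ to be a vertex forces $rI=0$ or $rI=I$, which is the defining condition of a second ideal (given $I\neq 0$, which we take as standing in the background of the definition of $\acute\Gamma_I(R)$; note that if $I=0$ then $\mathrm{Ann}_R(I)=R$ and $V$ is trivially empty, but in that degenerate case the statement is usually excluded). There is no real obstacle here, the argument is purely a reading of the vertex-set definition against the second-ideal definition, which is why the authors mark the proof as straightforward.
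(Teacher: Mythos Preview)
Your proof is correct and is exactly the direct unpacking of Definition~\ref{2.1} against the definition of a second ideal that the paper's ``Straightforward'' indicates; there is nothing to add. Your remark about the degenerate case $I=0$ (where $\mathrm{Ann}_R(I)=R$ forces $V=\emptyset$ while $I$ is not second) is a valid observation about a tacit nonzero hypothesis in the lemma.
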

\begin{thm}\label{2.3}
 Let $I$ be a proper ideal of $R$. Then we have the following.
 \begin{itemize}
   \item [(a)] The graph $\acute{\Gamma}_I(R)\setminus J(R)$ is connected.
   \item [(b)] If $R$ is a non-local ring, then diam $(\acute{\Gamma}_I(R)\setminus J(R)) \leq 2$.
 \end{itemize}
 \end{thm}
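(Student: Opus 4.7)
My first observation is that the local case of both (a) and (b) is vacuous. If $R$ is local with maximal ideal $\mathfrak{m}$, every vertex $x$ of $\acute{\Gamma}_I(R)$ satisfies $xI\ne I$, so $x$ cannot be a unit and therefore lies in $\mathfrak{m}=J(R)$. Hence $\acute{\Gamma}_I(R)\setminus J(R)$ has no vertices, and the conclusions hold trivially.

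So assume $R$ is non-local; then (b) implies (a), and I would focus on proving (b). Fix two distinct vertices $x,y\in\acute{\Gamma}_I(R)\setminus J(R)$ and, assuming they are not adjacent, build a common neighbour. Non-adjacency means $x\in yI$ or $y\in xI$, and up to interchanging the roles I write $y=xi$ with $i\in I$. Because $y\notin J(R)$, there is a maximal ideal $\mathfrak{m}$ missing $y$; the factorisation $y=xi$ forces $x,i\notin\mathfrak{m}$, and in particular $I\not\subseteq\mathfrak{m}$. Non-locality then supplies a second maximal ideal $\mathfrak{n}\ne\mathfrak{m}$.

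The key step is to establish
\[
\mathfrak{m}\not\subseteq\mathfrak{n},\qquad \mathfrak{m}\not\subseteq xI,\qquad \mathfrak{m}\not\subseteq\mathrm{Ann}_R(I),
\]
so that prime avoidance (valid since $\mathfrak{n}$ is prime, leaving at most two non-prime ideals in the cover) delivers some
\[
z\in\mathfrak{m}\setminus\bigl(\mathfrak{n}\cup xI\cup\mathrm{Ann}_R(I)\bigr).
\]
The first inequality is automatic from the distinctness of the maximal ideals. For the second, $\mathfrak{m}\subseteq xI\subseteq I\subsetneq R$ combined with the maximality of $\mathfrak{m}$ forces $I=\mathfrak{m}$, whence $xI=I$, contradicting the vertex condition on $x$. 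For the third, $\mathfrak{m}\subseteq\mathrm{Ann}_R(I)$ makes $I$ into an $R/\mathfrak{m}$-module; since $x\notin\mathfrak{m}$, the scalar $\bar{x}$ acts as a unit on the $R/\mathfrak{m}$-vector-space $I$, giving $xI=I$ and the same contradiction.

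It then remains only to read off that $z$ is the desired common neighbour. From $z\in\mathfrak{m}$ we get $zI\subseteq\mathfrak{m}$, so $zI\ne I$ (since $I\not\subseteq\mathfrak{m}$) and $x,y\notin zI$ (since $x,y\notin\mathfrak{m}$); together with $z\notin\mathrm{Ann}_R(I)$ this shows $z$ is a vertex of $\acute{\Gamma}_I(R)$. The choice $z\notin xI$ and the inclusion $yI\subseteq xI$ give $z\notin xI$ and $z\notin yI$; finally $z\notin\mathfrak{n}$ yields $z\notin J(R)$. Hence $x-z-y$ is a path of length $2$ inside $\acute{\Gamma}_I(R)\setminus J(R)$, proving (b), and with it (a). I expect the main obstacle to be verifying the three non-containments, the most delicate being $\mathfrak{m}\not\subseteq\mathrm{Ann}_R(I)$, which relies on the vector-space action of $\bar{x}$ on $I$; once these are in hand, prime avoidance and a direct check finish the argument.
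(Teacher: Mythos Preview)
Your argument is correct and follows the same skeleton as the paper's proof: reduce to the non-local case, assume one vertex lies in the other's $I$-multiple, pick a maximal ideal $\mathfrak{m}$ missing that vertex, and use an avoidance argument to produce an element of $\mathfrak{m}$ that serves as a common neighbour. The paper argues only that $\mathfrak{m}\not\subseteq J(R)\cup bI$ (two ideals, so the elementary union lemma suffices), picks $c\in \mathfrak{m}\setminus(J(R)\cup bI)$, and then simply asserts that $a\text{--}c\text{--}b$ is a path.

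Your version differs in two refinements rather than in strategy. First, you replace $J(R)$ by a single auxiliary maximal ideal $\mathfrak{n}$, which keeps the avoidance cover to one prime plus two arbitrary ideals so that the standard prime-avoidance lemma applies cleanly. Second, and more importantly, you also avoid $\mathrm{Ann}_R(I)$; this lets you verify explicitly that the middle element $z$ satisfies $zI\neq 0$, i.e.\ that $z$ is genuinely a vertex of $\acute{\Gamma}_I(R)$, a point the paper's ``it is easy to check'' does not address. Your check of $\mathfrak{m}\not\subseteq\mathrm{Ann}_R(I)$ via the $R/\mathfrak{m}$-vector-space action of $\bar{x}$ on $I$ is the right way to close that gap. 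So your route is essentially the paper's, but with the verification of vertex-membership and adjacency carried out in full.
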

\begin{proof}
(a) If $R$ has only one maximal ideal, then $V(\acute{\Gamma}_I(R)) \setminus J(R)$ is the empty set; which is connected. So we may assume that $|Max(R)| > 1$. Let $a,b \in V(\acute{\Gamma}_I(R)) \setminus J(R)$ be two distinct elements. Without loss of generality, we may assume that $a \in bI$. Since $a \not\in J(R)$, there exists a maximal ideal $m$ such that $a \not\in m$. We claim that $m \nsubseteq J(R)\cup bI$. Otherwise, $m \subseteq J(R) \cup bI$. This implies that $m \subseteq J(R)$ or $m \subseteq bI$. But $m \neq J(R)$. Hence we have $m  \subseteq bI \varsubsetneq R$, so $m = bI$. This implies that $a \in m$, a contradiction. Choose the element $c \in m\setminus J(R) \cup bI$. It is easy to check that $a-c-b$.

(b) This follows from part (a).
\end{proof}

\begin{rem}
Figure (B) in Example \ref{2.2} shows that $J(R)$ can not be omitted in Theorem \ref{2.3}.
\end{rem}

\begin{thm}\label{2.4}
Let $R$ be a non-local ring and $I$ a proper ideal of $R$ such that for every element $a\in J(R)$, there exists $m \in Max(R)$ and $b \in m\setminus J(R)$ with $a \not\in bR$. Then $\acute{\Gamma}_I(R)$ is connected and diam$(\acute{\Gamma}_I(R)) \leqslant 3$.
\end{thm}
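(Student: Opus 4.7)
The plan is to bound $d(u,v)$ for two arbitrary distinct vertices $u,v$ of $\acute{\Gamma}_I(R)$ by reducing everything to Theorem \ref{2.3}(b). If $u\sim v$, there is nothing to do; otherwise the adjacency condition fails, so $u\in vI$ or $v\in uI$, and after relabelling I may assume $u\in vI$. The first useful observation is that this WLOG automatically rules out the configuration ``$u\notin J(R)$ and $v\in J(R)$'', since $v\in J(R)$ forces $u\in vI\subseteq J(R)$. Thus only three cases remain: (i) both $u,v\notin J(R)$, (ii) both $u,v\in J(R)$, and (iii) $u\in J(R)$ with $v\notin J(R)$.

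Case (i) is immediate from Theorem \ref{2.3}(b): $d(u,v)\le 2$. For cases (ii) and (iii), I apply the hypothesis to $u\in J(R)$ to obtain $m\in Max(R)$ and $c\in m\setminus J(R)$ with $u\notin cR$; after a brief check that $c$ is a vertex of $\acute{\Gamma}_I(R)$, the adjacency $u\sim c$ holds because $u\notin cR\supseteq cI$ and, since $uI\subseteq J(R)$ while $c\notin J(R)$, also $c\notin uI$. The decisive new input coming from $u\in vI$ is that $v\notin cR$: otherwise $v\in cR$ would yield $u\in vI\subseteq vR\subseteq cR$, contradicting $u\notin cR$. In particular $v\notin cI$.

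In case (ii), $v\in J(R)$ gives $vI\subseteq J(R)$ and hence $c\notin vI$; combined with $v\notin cI$ this yields $c\sim v$ and $d(u,v)\le 2$. In case (iii), $c$ and $v$ are both vertices outside $J(R)$, so Theorem \ref{2.3}(b) supplies $d(c,v)\le 2$ and hence $d(u,v)\le 3$. I expect the main technical nuisance to be the verification that the element $c$ furnished by the hypothesis is genuinely a vertex (i.e., $c\notin Ann_R(I)$ and $cI\neq I$); if $c$ fails either test, the natural fix is to replace it by a perturbation such as $c+u$, which stays in $m\setminus J(R)$ and, because $u$ itself is a vertex, inherits the vertex property via $(c+u)I=uI$ in the awkward case $c\in Ann_R(I)$.
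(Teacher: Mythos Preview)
Your case analysis and the reduction to Theorem~\ref{2.3}(b) is precisely the ``technique of \cite[Theorem 2.5]{1}'' that the paper invokes as its entire proof, so the overall strategy matches. You have in fact gone further than the paper by isolating the one genuinely new difficulty in passing from $\acute{\Gamma}(R)$ to $\acute{\Gamma}_I(R)$: in the original cozero-divisor graph any $c\in m\setminus J(R)$ is automatically a vertex (it is a nonzero non-unit), whereas here one must verify $c\notin Ann_R(I)$ and $cI\neq I$, neither of which the hypothesis supplies.

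However, your proposed patch $c\mapsto c+u$ does not close this gap. When $c\in Ann_R(I)$ you correctly obtain $(c+u)I=uI$, so $c+u$ is a vertex; but the adjacency $u\sim c+u$ then demands $u\notin (c+u)I=uI$, and this can fail. For instance, take $R=\Bbb Z_2\times\Bbb Z_4$, $I=\{0\}\times\Bbb Z_4$, and $u=(0,2)\in J(R)$: the only element of any $m\setminus J(R)$ with $u\notin cR$ is $c=(1,0)\in Ann_R(I)$, and then $c+u=(1,2)$ satisfies $u=(0,2)\in(c+u)I=\{0\}\times 2\Bbb Z_4$. (In this example $\acute{\Gamma}_I(R)$ has exactly the two vertices $(0,2)$ and $(1,2)$ and no edge between them, so the obstruction is not merely cosmetic.) Your patch also leaves the possibility $cI=I$ untreated. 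None of this is a divergence from the paper---its one-line proof simply does not confront the issue---but the ``brief check'' you anticipate is not available in general.
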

\begin{proof}
Use the technique of \cite[Theorem 2.5]{1}.
 \end{proof}

\begin{thm}\label{2.5}
Let $R$ be a non-local ring and $I$ be a proper ideal of $R$. Then $g(\acute{\Gamma}_I(R) \setminus J(R)) \leqslant 5$ or $g(\acute{\Gamma}_I(R) \setminus J(R)) = \infty$.
\end{thm}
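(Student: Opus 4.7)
The plan is to argue by contradiction. Suppose $\acute{\Gamma}_I(R) \setminus J(R)$ contains a cycle but none of length at most $5$, and let $C : x_1 - x_2 - \cdots - x_n - x_1$ be a shortest cycle; then $n \geq 6$ and, by minimality, $C$ has no chord. Two facts will drive the argument. First, by the definition of adjacency in $\acute{\Gamma}_I(R)$, any two distinct nonadjacent vertices $u, v$ must satisfy $u \in vI$ or $v \in uI$. Second, the following \emph{propagation step} holds: if $u \in vI$, $v$ is adjacent to $w$, and $u, w$ are distinct and nonadjacent, then $u \in wI$, because otherwise $w \in uI \subseteq vI$ would contradict the adjacency of $v$ and $w$.

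The first step of the proof is to iterate the propagation step. Reversing the cyclic order of $C$ if necessary, we may assume $x_1 \in x_3 I$. Since $C$ has no chord and $n \geq 6$, the pair $x_1, x_k$ is nonadjacent for every $k \in \{3, 4, \ldots, n-1\}$, while consecutive pairs are adjacent. Applying the propagation step successively along $(x_3, x_4), (x_4, x_5), \ldots, (x_{n-2}, x_{n-1})$ yields $x_1 \in x_k I$ for each $k = 3, 4, \ldots, n-1$. In particular $x_1 \in x_4 I$, so we may write $x_1 = x_4 \gamma$ for some $\gamma \in I$.

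Next, consider the nonadjacent pair $x_3, x_5$. If $x_3 \in x_5 I$, then iterating propagation along $(x_5, x_6), (x_6, x_7), \ldots, (x_n, x_1)$ (at every step the required nonadjacency of $x_3$ and the new vertex follows from the absence of chords together with $n \geq 6$) yields $x_3 \in x_1 I$, say $x_3 = x_1 \beta$ with $\beta \in I$; substituting $x_1 = x_4 \gamma$ gives $x_3 = x_4 \gamma \beta \in x_4 I$, contradicting the adjacency of $x_3$ and $x_4$. If instead $x_5 \in x_3 I$, propagation along $(x_3, x_2)$ and $(x_2, x_1)$ gives $x_5 \in x_1 I$, say $x_5 = x_1 \delta$, so $x_5 = x_4 \gamma \delta \in x_4 I$, contradicting the adjacency of $x_4$ and $x_5$. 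Either branch yields a contradiction, so $n \leq 5$ and the conclusion follows.

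The main obstacle I foresee is the bookkeeping in the iterated propagation: at each step one must verify that the three vertices involved are distinct and that the required nonadjacency follows from the absence of chords in $C$, both of which depend on $n \geq 6$. Once these verifications are in place, the final contradiction reduces to a one-line multiplication inside $I$.
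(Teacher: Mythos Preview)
Your argument is correct. The propagation step is the heart of it, and your case split on the nonadjacent pair $x_3,x_5$ closes cleanly; the only imprecision is the phrase ``reversing the cyclic order,'' which really means reversing the orientation \emph{and} shifting the basepoint to $x_3$ so that the relabeled cycle has $y_1\in y_3 I$ --- but this is a harmless relabeling and the intent is clear.

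Your route, however, differs from the paper's. The paper does not work inside a minimal cycle at all: it simply invokes Theorem~\ref{2.3}, which gives $\mathrm{diam}(\acute{\Gamma}_I(R)\setminus J(R))\le 2$, and then appeals to the technique of \cite[Theorem~2.8]{1}, namely the general graph-theoretic fact that a graph of diameter $d$ containing a cycle has girth at most $2d+1$. With $d\le 2$ this immediately yields $g\le 5$. So the paper's proof is a two-line reduction to an earlier diameter bound plus a standard girth--diameter inequality, whereas yours is a self-contained combinatorial argument that exploits the specific algebraic form of adjacency (the inclusion $uI\subseteq vI$ whenever $u\in vI$). The paper's approach is shorter and modular; yours has the advantage of not relying on Theorem~\ref{2.3} or on the external reference, and it makes transparent exactly how the ideal containments force short cycles.
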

\begin{proof}
Use the technique of \cite[Theorem 2.8]{1} along with Theorem \ref{2.3}.
\end{proof}

\begin{thm}\label{2.19}
Let $I$ be a non-zero ideal of a commutative ring $R$. If $V(\acute{\Gamma}(R)) \subseteq V(\acute{\Gamma}_I(R))$, then  $Ann_R(I)=0$ or $R$ is a field (so $I=R$ ). The converse holds if $I$ is finitely generated.
\end{thm}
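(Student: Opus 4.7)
The approach is to prove the forward and converse directions separately; each reduces to one short definition chase, with the converse using a single classical module-theoretic trick.

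For the forward direction, I would argue by contradiction. Suppose $V(\acute{\Gamma}(R)) \subseteq V(\acute{\Gamma}_I(R))$ and that $R$ is not a field; the goal is to conclude $Ann_R(I) = 0$. Pick any $0 \neq a \in Ann_R(I)$. Since $I \neq 0$, the element $a$ cannot be a unit (if $a$ were invertible, $aI = 0$ would force $I = a^{-1}(aI) = 0$), so $a$ is a nonzero non-unit; equivalently, $a \in W^*(R) = V(\acute{\Gamma}(R))$. The inclusion then forces $a \in V(\acute{\Gamma}_I(R))$, but by Definition \ref{2.1} this vertex set contains no element of $Ann_R(I)$. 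The contradiction yields $Ann_R(I) = 0$.

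For the converse (with $I$ finitely generated) I would split into two subcases. If $R$ is a field then $W^*(R) = \emptyset$ and the inclusion is vacuous. Otherwise assume $Ann_R(I) = 0$ and fix $x \in W^*(R)$; the task is to verify $x \notin Ann_R(I)$ and $xI \neq I$. The first is immediate from $x \neq 0$ and $Ann_R(I) = 0$. For the second, suppose $xI = I$ and write $I = (a_1, \dots, a_n)$. Each $a_i$ then equals $\sum_j (x r_{ij}) a_j$, so the matrix $M = (\delta_{ij} - x r_{ij})$ annihilates the column $(a_1, \dots, a_n)^{t}$. Multiplying by $\mathrm{adj}(M)$ gives $\det(M)\cdot a_k = 0$ for every $k$, hence $\det(M) \in Ann_R(I) = 0$. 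In the expansion, every non-identity permutation contributes a term divisible by $x$, while the identity contributes $\prod_i (1 - x r_{ii})$, which is itself of the form $1 + x(\cdots)$; thus $\det(M) = 1 + xs$ for some $s \in R$. The relation $1 + xs = 0$ exhibits $x$ as a unit, contradicting $x \in W^*(R)$.

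The main obstacle is the converse, and specifically the recognition that the desired conclusion $xI \neq I$ is a Nakayama-type statement: the determinant trick is exactly what converts finite generation together with $Ann_R(I) = 0$ into the unit conclusion, and without finite generation one can have $xI = I$ for non-units. The forward direction, by contrast, carries no technical content beyond the observation that units cannot lie in the annihilator of a nonzero ideal.
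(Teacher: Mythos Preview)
Your proof is correct and follows essentially the same approach as the paper: the forward direction is the same observation that a nonzero element of $Ann_R(I)$ would be a non-unit and hence violate the vertex inclusion, and the converse is the same Nakayama-type argument (the paper cites \cite[Theorem~75]{4} for the existence of $t$ with $(1+tx)I=0$, which you spell out via the determinant trick). Your forward direction is in fact slightly more direct than the paper's, which takes a small detour through showing $W(R)=\{0\}$ rather than reaching the contradiction immediately.
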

\begin{proof}
Let $W^*(R)=V(\acute{\Gamma}(R)) \subseteq V(\acute{\Gamma}_I(R))$ and $Ann_R(I)\not=0$. Then $W^*(R)\subseteq R\setminus Ann_R(I)$. Thus $W(R) \cap Ann_R(I)=\{0\}$. Now suppose contrary that $I \not = R$. Let $0 \not = x \in Ann_R(I)$ and $y \in W(R)$. Then $xy \in W(R) \cap Ann_R(I)=\{0\}$ and $x  \not \in W(R)$. It follows that $y=0$ and hence $W(R)=\{0\}$. Therefore $R$ is a field. Conversely, if $I=R$ the result is clear. Now suppose that $I$ is a finitely generated ideal of $R$ such that $Ann_R(I)=0$ and $x \in V(\acute{\Gamma}(R))$. Then $xI \not=0$. If $xI=I$, then since $I$ is finitely generated, there exists $t \in R$ such that $(1+tx)I = 0$ by \cite[Theoram 75]{4}. Thus $1+tx \in Ann_R(I)=0$. This implies that $Rx=R$, which is a contradiction. Hence $x \in V(\acute{\Gamma}_I(R))$. Therefore $V(\acute{\Gamma}(R)) \subseteq V(\acute{\Gamma}_I(R))$.
\end{proof}

We will use the following lemma frequently in the sequel.

\begin{lem}\label{2.6}
Let $I \neq R$ be a finitely generated ideal of $R$ with $Ann_R(I) = 0$. Then $\acute\Gamma(R)$ is a subgraph of $\acute\Gamma_I(R)$.
\end{lem}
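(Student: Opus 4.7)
The plan is to observe that Lemma \ref{2.6} packages together two claims: containment of vertex sets, and preservation of edges under that containment. Fortunately, Theorem \ref{2.19} has already done essentially all the work for the vertex sets, so the lemma should reduce to a short verification.

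First I would invoke the converse direction of Theorem \ref{2.19}. Under the hypothesis that $I\neq R$ is finitely generated with $Ann_R(I)=0$, that argument shows every $x\in W^*(R)=V(\acute\Gamma(R))$ satisfies $x\notin Ann_R(I)$ and $xI\neq I$, and hence lies in $V(\acute\Gamma_I(R))$. (The nontrivial step there is handling the case $xI=I$: finite generation plus the determinant-trick form of Nakayama from \cite[Theorem 75]{4} gives some $t\in R$ with $(1+tx)I=0$, so $1+tx\in Ann_R(I)=0$, forcing $Rx=R$, contradicting $x$ being a nonunit.) So $V(\acute\Gamma(R))\subseteq V(\acute\Gamma_I(R))$.

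Next I would verify that any edge of $\acute\Gamma(R)$ survives as an edge of $\acute\Gamma_I(R)$. Suppose $x$ and $y$ are distinct adjacent vertices in $\acute\Gamma(R)$, so $x\notin yR$ and $y\notin xR$. Because $I\subseteq R$, one has $yI\subseteq yR$ and $xI\subseteq xR$, whence $x\notin yI$ and $y\notin xI$. By Definition \ref{2.1}, $x$ and $y$ are adjacent in $\acute\Gamma_I(R)$, finishing the proof.

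There is no real obstacle: the whole content is in Theorem \ref{2.19}, and the edge-preservation step is a one-line set-theoretic inclusion $yI\subseteq yR$. The lemma is therefore essentially a restatement of Theorem \ref{2.19} upgraded from vertex-set containment to subgraph containment, packaged for later use.
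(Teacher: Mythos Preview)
Your proposal is correct and follows essentially the same approach as the paper: invoke Theorem \ref{2.19} for the vertex-set containment, then use $yI\subseteq yR$ (the paper phrases this contrapositively, assuming $x\in yI$ and deducing $x\in yR$) for edge preservation. The only cosmetic difference is that the paper states the vertex sets are actually equal, whereas you only record the inclusion $V(\acute\Gamma(R))\subseteq V(\acute\Gamma_I(R))$; the inclusion is all that is needed for the subgraph claim.
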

\begin{proof}
By Theorem \ref{2.19} we have $V(\acute\Gamma_I(R)) = V(\acute\Gamma(R))$. Now let $x, y \in V(\acute{\Gamma}(R)) = V(\acute{\Gamma}_I(R))$ and $x$ is adjacent to $y$ in $\acute{\Gamma}(R)$. Then clearly, they are adjacent in $\acute{\Gamma}_I(R)$. Otherwise, we may assume that $x \in yI$. This implies that $x \in yR$, which is a contradiction. Hence $\acute\Gamma(R)$ is a subgraph of $\acute\Gamma_I(R)$.
\end{proof}

The following example shows that the inclusion relation between $\acute\Gamma_I(R)$ and $\acute{\Gamma}(R)$ in Lemma \ref{2.6} may be a restrict inclusion.
\begin{ex}
Let $R := \Bbb Z$ and $I: = 5\Bbb Z$. Then $V(\acute{\Gamma}_I(R)) = V(\acute{\Gamma}(R)) = \Bbb Z \setminus \{-1,0,1\}$. Now by Lemma \ref{2.6}, $\acute\Gamma(R)$ is subgraph of $\acute\Gamma_I(R)$. However, the elements $2$ and $6$ are adjacent in $\acute\Gamma_I(R)$ but they are not adjacent in $\acute\Gamma(R)$.
\end{ex}

\begin{thm}\label{2.7}
Let $I \neq R$ be a finitely generated ideal of $R$ with $Ann_R(I) = 0$. Suppose that $|Max (R)|\geq 3$. Then $g(\acute\Gamma_I(R)) = 3$.
\end{thm}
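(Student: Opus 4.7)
The plan is to reduce the problem to finding a triangle in the simpler graph $\acute{\Gamma}(R)$ and then transfer it to $\acute{\Gamma}_I(R)$ via Lemma \ref{2.6}. Since every simple graph has girth at least $3$, producing a triangle in $\acute{\Gamma}_I(R)$ will finish the proof.

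First I would invoke Lemma \ref{2.6}: under the hypotheses $I\ne R$, $I$ finitely generated, $Ann_R(I)=0$, the graph $\acute{\Gamma}(R)$ sits as a subgraph of $\acute{\Gamma}_I(R)$ on the same vertex set $W^*(R)$. Hence it suffices to exhibit a triangle in $\acute{\Gamma}(R)$.

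Next I would use the assumption $|Max(R)|\ge 3$ to pick three distinct maximal ideals $m_1,m_2,m_3$. Since maximal ideals are pairwise incomparable, prime avoidance gives elements $a_i\in m_i\setminus(m_j\cup m_k)$ for $\{i,j,k\}=\{1,2,3\}$. Each $a_i$ lies in the maximal ideal $m_i$, so it is a non-unit; and $a_i\ne 0$ because $0$ belongs to every maximal ideal while $a_i\notin m_j$ for $j\ne i$. Thus $a_1,a_2,a_3$ are three distinct elements of $W^*(R)$, and under the standing hypotheses $W^*(R)=V(\acute{\Gamma}(R))=V(\acute{\Gamma}_I(R))$.

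Finally I would verify pairwise adjacency in $\acute{\Gamma}(R)$. If $a_i\in a_jR$ for some $i\ne j$, say $a_i=a_jr$, then $a_i\in m_j$, contradicting the choice of $a_i$. By symmetry $a_i\notin a_jR$ and $a_j\notin a_iR$, so $a_i$ and $a_j$ are adjacent in $\acute{\Gamma}(R)$. Therefore $\{a_1,a_2,a_3\}$ is a triangle in $\acute{\Gamma}(R)$, and by Lemma \ref{2.6} it is also a triangle in $\acute{\Gamma}_I(R)$. Consequently $g(\acute{\Gamma}_I(R))\le 3$, and since $\acute{\Gamma}_I(R)$ is simple, $g(\acute{\Gamma}_I(R))=3$.

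The argument is essentially routine once Lemma \ref{2.6} is in place; the only delicate point is being careful that the $a_i$'s produced by prime avoidance genuinely lie in $V(\acute{\Gamma}_I(R))$, which is exactly where the hypotheses $Ann_R(I)=0$ and $I$ finitely generated are needed (through Theorem \ref{2.19}).
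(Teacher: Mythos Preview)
Your proof is correct and follows essentially the same approach as the paper: the paper simply says ``Use the technique of \cite[Theorem 2.9]{1}'', which amounts to exactly the triangle construction you carry out---choosing $a_i\in m_i\setminus(m_j\cup m_k)$ via prime avoidance and verifying pairwise adjacency. The only cosmetic difference is that you route the argument through $\acute{\Gamma}(R)$ and then invoke Lemma~\ref{2.6}, whereas the paper's phrasing suggests working directly in $\acute{\Gamma}_I(R)$; since $a_jI\subseteq a_jR$, your inclusion $a_i\notin a_jR$ immediately yields $a_i\notin a_jI$, so the two routes coincide.
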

\begin{proof}
Use the technique of \cite[Theorem 2.9]{1}.
\end{proof}

As we mentioned before, $V(\Gamma_I(R)) = \{ x \in R\setminus I\, |\, xy \in I\: for\: some\: y \in R \setminus I\}$. We will show this set by $Z_I(R)$. Clearly, for $I = 0$, $Z_I(R) = Z^*(R)$.
\begin{lem}\label{2.8}
Let $I \neq R$ be a finitely generated ideal of $R$ with $Ann_R(I) = 0$. Then $Z_I(R) \subseteq V(\acute{\Gamma}_I(R))$.
\end{lem}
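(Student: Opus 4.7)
The plan is to take an arbitrary element $x \in Z_I(R)$ and verify the two defining conditions for membership in $V(\acute{\Gamma}_I(R))$, namely that $x \notin Ann_R(I)$ and that $xI \neq I$. By definition of $Z_I(R)$, there is some $y \in R \setminus I$ with $xy \in I$, and $x$ itself lies in $R \setminus I$. Since $Ann_R(I) = 0$ forces $I \neq 0$, and since $0 \in I$ while $x \notin I$, we immediately get $x \neq 0$, so $x \notin Ann_R(I)$. This disposes of the first condition essentially for free.

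The second condition, $xI \neq I$, is the substantive step, and I would handle it by the same determinant-trick argument used in Theorem \ref{2.19}. Namely, suppose toward a contradiction that $xI = I$. Because $I$ is finitely generated, Kaplansky's \cite[Theorem 75]{4} supplies an element $t \in R$ with $(1+tx)I = 0$, so $1+tx \in Ann_R(I) = 0$, which yields $(-t)x = 1$. Thus $x$ is a unit in $R$. But then from $xy \in I$ we would conclude $y = x^{-1}(xy) \in I$, contradicting the choice $y \in R \setminus I$. Hence $xI \neq I$, and therefore $x \in V(\acute{\Gamma}_I(R))$, completing the inclusion.

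The only real obstacle is the passage from $xI = I$ to $x$ being a unit, and that obstacle is already neutralized by the Cayley--Hamilton / determinant-trick lemma invoked in the proof of Theorem \ref{2.19}; after that, the remainder is bookkeeping against the hypotheses $Ann_R(I)=0$ and $y \notin I$.
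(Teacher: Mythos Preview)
Your proof is correct and follows essentially the same route as the paper's: verify $x \notin Ann_R(I)=\{0\}$ directly from $x \notin I$, then rule out $xI = I$ via the determinant trick (Kaplansky's \cite[Theorem~75]{4}) to force $x$ to be a unit, contradicting $y \notin I$. The only cosmetic difference is that the paper treats the case $I=0$ separately up front, whereas you deduce $I \neq 0$ from $Ann_R(I)=0$.
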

\begin{proof}
If $I = 0$, then the claim is clear. So we assume that $I\neq 0$. Now let $x \in Z_I(R)$ then $x \neq 0$ and there exists $y \in R\setminus I$ such that $xy \in I$. Clearly, $xI \neq 0$. Further $xI \neq I$. Otherwise, $xI = I$. Since $I$ is finitely generated, there exists $t \in R$ such that $(1 + tx)I = 0$ by \cite[Theorem 75] {4}. This implies that $1 + tx = 0$. So $x$ is a unit element of $R$ and hence $y \in I$, which is a contradiction.  Therefore $x \in V(\acute{\Gamma}_I(R))$.
\end{proof}
The next example shows that the inclusion in Lemma \ref{2.8} is not strict in general.
\begin{ex}\label{2.8.1}
Let $I$ be a finitely generated ideal of $R$ with $Ann_R(I) = 0$. Further we assume that $R$ is an Artinian ring with $Z(R) \cap I = 0$. Then we have $V(\acute{\Gamma}_I(R)) = Z_I(R)$. To see this, it is enough to prove that $V(\acute{\Gamma}_I(R)) \subseteq Z_I(R)$ by Lemma \ref {2.8}. Let $x \in V(\acute{\Gamma}_I(R)$. Then we have $x \neq 0$ and $xI \neq I$. This implies that $xR \neq R$ and hence $x$ is a non-unit element of $R$. Since $R$ is Artinian, the set of non-unit elements of $R$ is the same as the set of zero-divisors of $R$. So $x \in Z(R)$. This shows that $x\not \in I$ and there exists $0 \neq y \in R\setminus I$ such that $xy = 0 \in I$. Clearly, $x, y \in Z(R)$. Therefore, $V(\acute{\Gamma}_I(R)) \subseteq Z_I(R)$.
\end{ex}
\begin{thm}\label{2.8.2}
Let $I$ be a finitely generated ideal of $R$ with $\sqrt{I} = I$ and $Ann_R(I) = 0$. Suppose that $Z_I(R) = V(\acute{\Gamma}_I(R))$. If $\Gamma_I(R)$ is complete, then $\acute{\Gamma}_I(R)$ is also a complete graph.
\end{thm}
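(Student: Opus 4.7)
The plan is to verify the adjacency condition directly for each pair of distinct vertices of $\acute{\Gamma}_I(R)$. I will fix two distinct $x,y \in V(\acute{\Gamma}_I(R))$ and use the hypothesis $V(\acute{\Gamma}_I(R)) = Z_I(R)$ to locate both vertices inside $R \setminus I$ (since by definition $Z_I(R) \subseteq R \setminus I$). The completeness of $\Gamma_I(R)$ additionally gives $xy \in I$; I will record this but suspect it is not needed for the core argument.

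Next, to show that $x$ and $y$ are adjacent in $\acute{\Gamma}_I(R)$, I must verify the two conditions $x \notin yI$ and $y \notin xI$. Suppose for contradiction that $x \in yI$, so $x = yi$ for some $i \in I$. Because $I$ is an ideal of $R$, the product $yi$ lies in $I$, whence $x \in I$. This contradicts $x \in R \setminus I$. By symmetry the assumption $y \in xI$ is equally impossible, so $x$ and $y$ are adjacent, and $\acute{\Gamma}_I(R)$ is complete.

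The main thing to notice — and in fact the only real step — is that the containment $yI \subseteq I$ together with $V(\acute{\Gamma}_I(R)) = Z_I(R) \subseteq R\setminus I$ already forces both adjacency conditions automatically. The remaining hypotheses ($\sqrt{I} = I$, $I$ finitely generated, $Ann_R(I) = 0$, and completeness of $\Gamma_I(R)$) do not enter the core argument; their role is presumably to guarantee the nontrivial standing assumption $V(\acute{\Gamma}_I(R)) = Z_I(R)$ in the kinds of examples treated elsewhere in the paper (compare Lemma \ref{2.8} and Example \ref{2.8.1}). Consequently I do not anticipate any genuine obstacle: once the vertex set is pinned down to $R \setminus I$, the ideal absorption property $yI \subseteq I$ closes the proof in one line.
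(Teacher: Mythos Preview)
Your argument is correct and genuinely more direct than the paper's. The paper proceeds by contradiction as you do, writing $a = bi$ with $i \in I$, but then takes a detour: it first argues that $i$ must be a unit (otherwise $i \in V(\acute{\Gamma}(R)) \subseteq V(\acute{\Gamma}_I(R)) = Z_I(R)$ via Lemma~\ref{2.6}, which already uses the finitely-generated and $Ann_R(I)=0$ hypotheses, and this contradicts $i \in I$); it then invokes completeness of $\Gamma_I(R)$ to obtain $ab \in I$, deduces $b^2 = i^{-1}(ab) \in I$, and finally uses $\sqrt{I} = I$ to conclude $b \in I$, contradicting $b \in Z_I(R)$. Your observation that $a = bi \in I$ holds immediately by ideal absorption short-circuits all of this, reaching the same contradiction ($a \in I$ versus $a \in Z_I(R) \subseteq R\setminus I$) in one line. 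The upshot is that your route actually proves a stronger statement --- the single hypothesis $Z_I(R) = V(\acute{\Gamma}_I(R))$ already forces $\acute{\Gamma}_I(R)$ to be complete --- whereas the paper's argument genuinely consumes $\sqrt{I}=I$, completeness of $\Gamma_I(R)$, and Lemma~\ref{2.6} along its longer path.
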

\begin{proof}
Assume on the contrary that $\acute{\Gamma}_I(R)$ is not complete. So there exist $a, b \in V(\acute{\Gamma}_I(R))$ such that $a \in bI$ or $b \in aI$. Without loss of generality, we may assume that $a \in bI$. So, there exists $i \in I$ such that $a = bi$. We claim that $i$ is a unit element. Otherwise, $i \in V(\acute{\Gamma}(R)$. Thus we have $i \in V(\acute{\Gamma}_I(R))$ by Lemma \ref{2.6}. Hence $i \in Z_I(R)$ by assumption, which is a contradiction. Now $ab = b^{2}i \in I$. So there exist $i_1 \in I$ such that $b^{2}i = i_1$. Then $b^{2}= i^{-1}i_1 \in I$. Therefore, $b \in \sqrt{I} = I$, a contradiction.
\end{proof}

\begin{prop}\label{2.9}
Let $I$ be a proper ideal of $R$ and $\acute\Gamma_I(R)$ a complete bipartite graph with parts $V_i$, $i = 1,2$. Then every cyclic ideals $\textbf{a} ,\textbf{ b} \subseteq V_i$, for some $i =1,2$, are totally ordered.
\end{prop}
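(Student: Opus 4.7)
The plan is to translate the graph-theoretic hypothesis (complete bipartiteness) directly into a ring-theoretic containment statement for the generators. The point is that if $\acute{\Gamma}_I(R)$ is complete bipartite with parts $V_1,V_2$, then by definition no two distinct vertices of the same part can be adjacent, and the definition of adjacency in $\acute{\Gamma}_I(R)$ is designed precisely to convert non-adjacency into an ``$I$-multiple'' relation.

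First I would fix cyclic ideals $\mathbf{a}=aR$ and $\mathbf{b}=bR$ with $\mathbf{a},\mathbf{b}\subseteq V_i$; in particular the generators $a$ and $b$ lie in $V_i$. The case $a=b$ is trivial, so I assume $a\neq b$. Since $\acute{\Gamma}_I(R)$ is complete bipartite with parts $V_1,V_2$, vertices from the same part are pairwise non-adjacent, so $a$ and $b$ are non-adjacent distinct vertices.

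Next I would unpack non-adjacency via Definition \ref{2.1}: failure of the adjacency condition ``$a\notin bI$ and $b\notin aI$'' yields $a\in bI$ or $b\in aI$. Without loss of generality $a\in bI\subseteq bR$, so $\mathbf{a}=aR\subseteq bR=\mathbf{b}$, which shows that $\mathbf{a}$ and $\mathbf{b}$ are comparable under inclusion, i.e.\ totally ordered.

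I do not anticipate any substantive obstacle here: the proof is essentially a one-step reduction from the non-edge condition in the bipartite graph to containment of the corresponding cyclic ideals, which is exactly the feature that the definition of $\acute{\Gamma}_I(R)$ encodes. The only minor point to keep in mind is the degenerate case $a=b$, which is dispatched immediately, and the fact that $bI\subseteq bR$ is used to upgrade the $I$-multiple relation to an $R$-multiple relation.
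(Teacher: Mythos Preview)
Your proposal is correct and follows essentially the same argument as the paper: non-adjacency of $a,b$ within a single part forces $a\in bI$ or $b\in aI$, hence $aR\subseteq bR$ or $bR\subseteq aR$. The paper phrases this by contradiction (assuming $aR\nsubseteq bR$ and $bR\nsubseteq aR$, deducing $a\notin bI$ and $b\notin aI$, and obtaining an edge inside a part), while you argue directly; the logical content is identical.
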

\begin{proof}
Assume on the contrary that there exist ideals $aR$ and $bR$ in $V_1$ such that $aR \nsubseteq bR$ and $bR \nsubseteq aR$. It follows that $b \not \in aR$ and $a \not \in bR$. Hence $b \not \in aI$ and $a \not \in bI$. This means $a$ is adjacent to $b$, a contradiction.
\end{proof}

\begin{prop}\label{2.10}
Let $I \neq R$ be a finitely generated ideal of $R$ with $Ann_R(I) = 0$. If the graph $\acute\Gamma_I(R)\setminus J(R)$  is n-partite for some positive integer n, then $|Max(R)| \leq n$.
\end{prop}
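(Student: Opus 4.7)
The plan is to argue by contradiction: assume $|Max(R)| \geq n+1$ and produce an $(n+1)$-clique inside $\acute\Gamma_I(R)\setminus J(R)$, which is impossible in an $n$-partite graph. Concretely, I would pick $n+1$ distinct maximal ideals $m_1,\dots,m_{n+1}$. Since maximal ideals are prime and pairwise incomparable, prime avoidance yields, for each $i$, an element
\[
x_i \in m_i \setminus \bigcup_{j\neq i} m_j.
\]

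Next I would check that each $x_i$ is an eligible vertex of $\acute\Gamma_I(R)\setminus J(R)$. Since $x_i \in m_i$, it is a non-unit, and since $x_i \notin m_j$ for any $j\neq i$, it is nonzero and outside $J(R)=\bigcap_m m$. Thus $x_i \in W^*(R) = V(\acute\Gamma(R))$, and Theorem~\ref{2.19} (applicable because $I$ is finitely generated with $Ann_R(I)=0$ and $I\neq R$) gives $V(\acute\Gamma(R)) \subseteq V(\acute\Gamma_I(R))$. So $x_i \in V(\acute\Gamma_I(R))\setminus J(R)$.

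Now I would verify pairwise adjacency. Suppose for some $i\neq j$ that $x_i \in x_j I$. Then $x_i \in x_j R \subseteq m_j$, contradicting $x_i \notin m_j$. By symmetry $x_j \notin x_i I$ as well, so $x_i$ and $x_j$ are adjacent in $\acute\Gamma_I(R)\setminus J(R)$. Hence $\{x_1,\dots,x_{n+1}\}$ is a clique of size $n+1$, which cannot be embedded in any $n$-partite graph; this contradiction forces $|Max(R)|\leq n$.

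The only subtle point is verifying that each $x_i$ really lands in $V(\acute\Gamma_I(R))$ rather than only in $V(\acute\Gamma(R))$; this is handled cleanly by Theorem~\ref{2.19}, whose hypotheses are exactly what we have. Everything else is a direct application of prime avoidance and the definition of adjacency, so I do not anticipate a real obstacle beyond invoking the earlier results correctly.
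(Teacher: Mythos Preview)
Your proof is correct and follows essentially the same route as the paper's: both argue by contradiction, select elements from distinct maximal ideals that are pairwise adjacent in $\acute\Gamma_I(R)\setminus J(R)$, and conclude this is incompatible with an $n$-partition. The only cosmetic difference is that you build the full $(n+1)$-clique via prime avoidance, whereas the paper invokes pigeonhole directly to land two such elements in the same part; the underlying mechanism (if $a\in m\setminus m'$ and $b\in m'\setminus m$ then $a\notin bI\subseteq m'$ and $b\notin aI\subseteq m$) is identical, and both use Lemma~\ref{2.6}/Theorem~\ref{2.19} to place the chosen elements inside $V(\acute\Gamma_I(R))$.
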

\begin{proof}
Assume contrary that $|Max(R)|> n$. Since  $\acute\Gamma_I(R)\setminus J(R)$ is a n-partite graph and $V(\acute\Gamma_I(R)) = V(\acute\Gamma(R))$ by Lemma \ref{2.6}, there exist $m, \acute{m} \in Max(R)$ and $a \in m \setminus \acute{m},b \in \acute{m} \setminus m$ such that $a, b$ belong to a same part. Clearly, $a \not \in bI$ and $b \not \in aI$, which is a contradiction.
\end{proof}

For a graph $G$, let $\chi (G)$ denote the \emph{chromatic number} of the graph
$G$, i.e., the minimal number of colors which can be assigned to the vertices of
$G$ in such a way that every two adjacent vertices have different colors. A \emph{clique}
of a graph $G$ is a complete subgraph of $G$ and the number of vertices in the largest
clique of $G$, denoted by $clique(G)$, is called the \emph{clique number} of $G$.
\begin{thm}\label{2.11}
 \item [(1)] Let $I \neq R$ be a finitely generated ideal of $R$ with $Ann_R(I) = 0$. Then if $R$ has infinite member of maximal ideal, then clique $\acute{\Gamma }_I(R)$ is also infinite; otherwise clique $(\acute{\Gamma }_I(R)) \geqslant |Max(R)|$.
 \item [(2)] If $\chi(\acute{\Gamma }_I(R)) < \infty$, then $|Max(R)| < \infty$.
 \end{thm}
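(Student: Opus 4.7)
The plan is to reduce the problem to constructing large cliques in the ordinary cozero-divisor graph $\acute{\Gamma}(R)$, invoking Lemma \ref{2.6}, which under our hypotheses ($I \neq R$ finitely generated, $Ann_R(I)=0$) exhibits $\acute{\Gamma}(R)$ as a subgraph of $\acute{\Gamma}_I(R)$ sharing the same vertex set. Since any clique of $\acute{\Gamma}(R)$ is automatically a clique of $\acute{\Gamma}_I(R)$, for part (1) it suffices to exhibit, for each finite collection of distinct maximal ideals, a clique in $\acute{\Gamma}(R)$ of the corresponding size.

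For the construction, given distinct maximal ideals $m_1,\ldots,m_k \in Max(R)$, I would apply the standard prime avoidance lemma: for each index $i$, $m_i \nsubseteq \bigcup_{j \neq i} m_j$, so pick $a_i \in m_i \setminus \bigcup_{j \neq i} m_j$. Each $a_i$ is a nonunit (it lies in $m_i$) and nonzero (since $0 \in m_j$ for all $j$, any zero element is excluded from the set $m_i \setminus \bigcup_{j \neq i} m_j$), so $a_i \in W^*(R) = V(\acute{\Gamma}(R)) = V(\acute{\Gamma}_I(R))$ by Theorem \ref{2.19}. For $i \neq j$, I would verify adjacency in $\acute{\Gamma}(R)$: if $a_i \in a_j R$, then $a_i \in m_j$, contradicting the choice of $a_i$; by symmetry $a_j \notin a_i R$ as well. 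Hence $\{a_1,\ldots,a_k\}$ forms a clique of size $k$ in $\acute{\Gamma}(R)$, and thus in $\acute{\Gamma}_I(R)$ by Lemma \ref{2.6}. If $|Max(R)|$ is infinite, $k$ may be chosen arbitrarily large, giving an infinite clique number; if $|Max(R)|$ is finite, taking $k = |Max(R)|$ yields $\operatorname{clique}(\acute{\Gamma}_I(R)) \geq |Max(R)|$.

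Part (2) is then a short deduction from part (1) via the universal inequality $\chi(G) \geq \operatorname{clique}(G)$: if $|Max(R)|$ were infinite, part (1) would force $\operatorname{clique}(\acute{\Gamma}_I(R)) = \infty$, hence $\chi(\acute{\Gamma}_I(R)) = \infty$, contradicting the hypothesis.

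I do not anticipate a real obstacle here; the argument is essentially a matter of recognizing that Lemma \ref{2.6} reduces the problem to the ungeneralized cozero-divisor graph, after which prime avoidance delivers the clique construction almost by inspection. The only small point requiring care is confirming that the chosen $a_i$ are genuine vertices of $\acute{\Gamma}_I(R)$, which follows from Theorem \ref{2.19}.
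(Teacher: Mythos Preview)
Your proposal is correct and follows essentially the same route as the paper: the paper's proof simply invokes Lemma~\ref{2.6} together with \cite[Theorem~2.14]{1}, and your argument reproduces exactly that strategy while spelling out the prime avoidance construction that underlies \cite[Theorem~2.14]{1} rather than citing it. The deduction of part~(2) from part~(1) via $\chi(G)\geq \operatorname{clique}(G)$ is likewise what the paper intends.
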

\begin{proof}
(1) This follows from Lemma \ref{2.6} and \cite[Theorem 2.14]{1}.

(2) Use part (1) along with \cite[Theorem 2.14]{1}.
\end{proof}

\begin{thm}\label{2.12}
Let $R = S_1 + S_2$, where $S_1$ and $S_2$ are second ideals of $R$. If $P_1 = Ann_R(S_1)$ and $P_2 = Ann_R(S_2)$, then $V(\acute{\Gamma}(R)) = (P_1 \setminus P_2) \cup (P_2 \setminus P_1)$ and $\acute{\Gamma }(R)$ is a complete bipartite graph.
\end{thm}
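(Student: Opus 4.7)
The plan is to build everything from two observations. First, because $R=S_1+S_2$, we may fix a decomposition $1=e_1+e_2$ with $e_i\in S_i$. Second, for each $r\in R$ the second-ideal property leaves only four possibilities for the pair $(rS_1,rS_2)\in\{0,S_1\}\times\{0,S_2\}$. These two facts drive the entire argument; neither primality of the $P_i$, finite generation, nor any earlier lemma in the paper is needed.

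For the vertex set I would case-split on the pair $(rS_1,rS_2)$. If $rS_1=rS_2=0$, then $rR=r(S_1+S_2)=0$, so $r=0$. If $rS_1=S_1$ and $rS_2=S_2$, then $rR=rS_1+rS_2=R$, so $r$ is a unit. The remaining two cases correspond exactly to membership in $P_1\setminus P_2$ and in $P_2\setminus P_1$; in each of them $r$ is automatically nonzero (some $rS_i=S_i\neq 0$) and non-unit (the other $rS_j=0$ while $S_j\neq 0$, so $r$ cannot be invertible). This yields $V(\acute\Gamma(R))=(P_1\setminus P_2)\cup(P_2\setminus P_1)$, and the two sets are visibly disjoint.

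For the bipartite structure the pivotal step is the observation that if $x\in P_1$, then $xe_1\in xS_1=0$, so $x=xe_1+xe_2=xe_2\in S_2$; hence $P_1\setminus P_2\subseteq S_2$, and symmetrically $P_2\setminus P_1\subseteq S_1$. Given two vertices $x,y\in P_1\setminus P_2$, the condition $x\notin P_2$ gives $xS_2=S_2$, so $y\in S_2=xS_2\subseteq xR$, and hence $x,y$ are not adjacent in $\acute\Gamma(R)$; the same argument works inside $P_2\setminus P_1$. Finally, for $x\in P_1\setminus P_2$ and $y\in P_2\setminus P_1$, assuming $x=yr$ would force $xS_2=y(rS_2)\subseteq yS_2=0$, contradicting $x\notin P_2$; the symmetric swap rules out $y\in xR$. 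Thus every cross pair is an edge and every within-part pair is a non-edge, which is exactly the complete bipartite structure claimed. There is no real obstacle — once the observation $x\in P_1\Rightarrow x\in S_2$ is in hand, the rest is bookkeeping.
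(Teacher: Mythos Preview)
Your proof is correct and takes essentially the same approach as the paper's: both rely on the four-way case split on $(rS_1,rS_2)\in\{0,S_1\}\times\{0,S_2\}$ to identify the vertex set, and both handle cross-part adjacency via the annihilator contradiction $x=yr\Rightarrow xS_2\subseteq yS_2=0$. Your explicit use of $1=e_1+e_2$ to obtain $P_1\subseteq S_2$ is a minor variant of the paper's implicit $xR=xS_1+xS_2=S_2$; in fact your within-part non-adjacency step (showing $y\in S_2=xS_2\subseteq xR$) is cleaner and more complete than the paper's somewhat garbled argument at that point.
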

\begin{proof}
Let $x \in V(\acute{\Gamma}(R))$, so we have $xR \neq 0$ and $xR \neq R$. Since $xR \neq 0$, $xS_1 \neq 0$ or $xS_2 \neq 0$. First we show that $V(\acute{\Gamma}(R)) = (P_1 \setminus P_2) \cup (P_2 \setminus P_1)$. If $xS_1 \neq 0$, then $x \not \in P_1$. So  $xS_1 = S_1$. We claim that $xS_2 = 0$. Otherwise, $xS_2 \neq 0$ so that $x \not \in P_2$. It means that $xS_2 = S_2$. Thus $xR = R$, a contradiction. So we have $x \in P_2$ hence $x \in (P_2 \setminus P_1) \cup (P_2 \setminus P_1)$. We have similar arguments for reverse inclusion. Now let $x \in P_1 \setminus P_2$ and $y \in P_2 \setminus P_1$. We show that $x \not \in yR$ and $y \not \in xR$. Otherwise, $x \in yR$ or $y \in xR$. Without loss of generality, $x \in yR$. Then there exists $t \in R$ such that $x = ty$. But $x\not\in P_2$ implies that $ty \not \in Ann_R(S_2)$ so that  $tyS_2 \neq 0$ , a contradiction. Thus, $x$ is adjacent to $y$. Now we show that $x$ and $y$ can not lie in $P_1 \setminus P_2$ or $P_1 \setminus P_2$. To see this let $x, y \in P_1 \setminus P_2$ and assume that they are adjacent. Then we have $x \not \in yR$ and $y \not \in xR$. Now by using our assumptions, we conclude that $x \not \in xR$, a contradiction.
\end{proof}
\begin{thm}\label{2.12.1}
Let $I \neq R$ be a finitely generated ideal of $R$ with $Ann_R(I) = 0$. Assume that $|Max(R)| \geq 5$. Then $\acute{\Gamma}_I(R)$ is not planar.
\end{thm}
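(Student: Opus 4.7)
The plan is to reduce planarity of $\acute{\Gamma}_I(R)$ to the presence of a $K_5$ subgraph and then invoke Kuratowski's theorem. The hypotheses of Theorem \ref{2.11} are precisely satisfied: $I$ is a proper finitely generated ideal with $Ann_R(I)=0$. So I would first apply Theorem \ref{2.11}(1), which, in the case $|Max(R)|<\infty$, gives
\[
clique(\acute{\Gamma}_I(R)) \;\geq\; |Max(R)| \;\geq\; 5,
\]
and, in the case $|Max(R)|=\infty$, gives an infinite clique, which certainly contains a $5$-clique. Either way, $K_5$ embeds as a subgraph of $\acute{\Gamma}_I(R)$.

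Next I would recall Kuratowski's theorem: a finite graph is planar if and only if it contains no subdivision of $K_5$ or $K_{3,3}$, and more generally a graph containing a $K_5$ subgraph cannot be planar. Since $\acute{\Gamma}_I(R)$ contains $K_5$ as a subgraph by the previous paragraph, it is not planar. This finishes the argument.

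The work is therefore almost entirely bookkeeping: unpacking Theorem \ref{2.11}(1) to produce five mutually adjacent vertices. For transparency one could even exhibit such a clique explicitly by choosing, for each of five distinct maximal ideals $m_1,\dots,m_5$, an element $x_i \in \bigcap_{j\neq i} m_j \setminus m_i$ (possible by the Chinese Remainder Theorem / prime avoidance), and verifying that no $x_i$ lies in $x_j I$ (otherwise $x_i \in x_j R \subseteq m_i$, contradicting $x_i \notin m_i$), so the $x_i$ form a $K_5$. There is no real obstacle; the only mild subtlety is handling the two cases of Theorem \ref{2.11}(1) uniformly, which is immediate since any graph with an infinite clique also contains $K_5$.
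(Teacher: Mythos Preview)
Your argument is correct and is essentially the same as the paper's: the paper simply cites Lemma~\ref{2.6} together with \cite[Theorem~3.9]{1} (non-planarity of $\acute{\Gamma}(R)$ when $|Max(R)|\geq 5$), which amounts to the same $K_5$-via-maximal-ideals observation you route through Theorem~\ref{2.11}(1) and Kuratowski. Your version is slightly more explicit, but the underlying mechanism is identical.
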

\begin{proof}
This follows from Lemma \ref{2.6} and \cite [Theorem 3.9]{1}.
\end{proof}

\begin{prop}\label{2.13}
Let $R$ be a ring and $I$ be a proper ideal. Then the following hold.
\begin{itemize}
  \item [(a)] $V(\Gamma_{Ann(I)}(R))\subseteq V(\acute{\Gamma}_I(R))$.
  \item [(b)] If $R$ be a reduced ring, then $\Gamma_{Ann(I)}(R)$ is a subgraph of $\acute{\Gamma}_I(R)$.
\end{itemize}
\end{prop}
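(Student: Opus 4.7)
The plan is to verify (a) directly from the definitions of the two vertex sets, and then to use (a) together with a square-zero argument to establish (b).

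For (a), I take $x \in V(\Gamma_{Ann(I)}(R))$, so by definition $x \in R\setminus Ann(I)$ and there exists $y \in R\setminus Ann(I)$ with $xy \in Ann(I)$. The condition $x\not\in Ann_R(I)$ required for membership in $V(\acute{\Gamma}_I(R))$ is then automatic, so the only thing left to check is $xI \neq I$. I would argue this by contradiction: if $xI = I$, then multiplying by $y$ yields
\[
yI \;=\; y(xI) \;=\; (xy)I \;=\; 0,
\]
since $xy \in Ann(I)$, which forces $y \in Ann(I)$ and contradicts the choice of $y$.

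For (b), part (a) already gives the inclusion of vertex sets, so it suffices to check that an edge $x$--$y$ of $\Gamma_{Ann(I)}(R)$ survives in $\acute{\Gamma}_I(R)$; that is, $x \not\in yI$ and $y \not\in xI$. By symmetry I only need to establish $x \not\in yI$. Suppose for contradiction that $x = yi$ for some $i \in I$. Then $x^2 = xyi$, and because $xy \in Ann(I)$ and $i \in I$, we have $xyi = 0$. Hence $x^2 = 0$, and reducedness of $R$ forces $x = 0$, which contradicts $x \not\in Ann(I)$.

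The only genuinely delicate step is the use of reducedness in (b): without it the argument would at best yield $x^2 = 0$, which is not enough to reach a contradiction, so the subgraph inclusion could conceivably fail in a non-reduced ring. Everything else is bookkeeping with the two vertex-set definitions and the basic identity $(xy)I = y(xI)$.
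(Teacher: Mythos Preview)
Your proof is correct and follows essentially the same approach as the paper's own proof: for (a) you both argue by contradiction that $xI=I$ would force $yI=(xy)I=0$, and for (b) you both derive $x^2\in xyI=0$ from $x\in yI$ and invoke reducedness to obtain the contradiction. Your write-up is slightly more explicit about where reducedness enters, but the argument is the same.
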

\begin{proof}
 (a) Let $x \in V(\Gamma_{Ann(I)}(R))$. Then there  exists $y \in R\setminus Ann_R(I)$ such that $xy \in Ann_R(I)$. We claim that $xI \neq I$. Otherwise, $xI = I$. Then $xyI = yI$ so that $yI = 0$. This implies that $y \in Ann_R(I)$, a contradiction. Therefore, $V(\Gamma_{Ann(I)}(R))\subseteq V(\acute{\Gamma}_I(R)) $.

 (b) By part (a),  $V(\Gamma_{Ann(I)}(R))\subseteq V(\acute{\Gamma}_I(R))$. Now we suppose that $x$ is adjacent to $y$  in $\Gamma_{Ann(I)}(R)$. We show that $x$ is adjacent to $y$ in $\acute{\Gamma}_I(R)$. Otherwise, without loss generality, we assume that $x \in yI$. So that ${x}^2 \in xyI$. Thus ${x}^2 = 0$. This implies that $x \in Ann_R(I)$, a contradiction.
\end{proof}

\begin{prop}\label{2.131}
Let $I$ be a finite generated proper ideal of $R$. Suppose that $x,y \in R\setminus Ann_R(I)$.
\begin{itemize}
  \item [(a)] $x \in V((\acute{\Gamma}_I(R))$ if and only if $x+Ann_R(I) \in V(\acute{\Gamma}(R/Ann_R(I))$.
  \item [(b)] If $x+Ann_R(I) $ is adjacent to $y+Ann_R(I)$ in $\acute{\Gamma}(R/Ann_R(I))$, then $x$ is adjacent to $y$ in $\acute{\Gamma}_I(R)$.
\end{itemize}
\end{prop}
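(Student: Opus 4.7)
The plan is to reduce both parts to the Kaplansky/Nakayama determinant trick already invoked in Theorem~\ref{2.19} and Lemma~\ref{2.8}. Write $A := Ann_R(I)$ and $\bar{r} := r + A$ for $r \in R$.

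For part (a), the hypothesis $x \in R \setminus A$ is exactly $\bar{x} \neq \bar{0}$, so only the remaining content must be matched: $xI \neq I$ if and only if $\bar{x}$ is a non-unit of $R/A$. I would argue the contrapositive in both directions. If $\bar{x}$ is a unit, pick $t \in R$ with $tx - 1 \in A$; then $(tx-1)I = 0$, so for every $i \in I$ we have $i = tx \cdot i = x(ti) \in xI$, giving $I \subseteq xI \subseteq I$ and hence $xI = I$. Conversely, suppose $xI = I$. Because $I$ is finitely generated, \cite[Theorem~75]{4} produces $t \in R$ with $(1+tx)I = 0$, hence $1+tx \in A$, and then $\overline{-t}\,\bar{x} = \bar{1}$ exhibits $\bar{x}$ as a unit of $R/A$. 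This settles (a).

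For part (b), assume $\bar{x}$ is adjacent to $\bar{y}$ in $\acute{\Gamma}(R/A)$. In particular both are vertices of $\acute{\Gamma}(R/A)$, so part (a) already ensures $x, y \in V(\acute{\Gamma}_I(R))$. If one had $x \in yI$, then writing $x = yi$ with $i \in I$ gives $\bar{x} = \bar{y}\,\bar{i} \in \bar{y}\,(R/A)$, contradicting the adjacency assumption in the quotient graph; by symmetry $y \notin xI$ as well, and so $x$ is adjacent to $y$ in $\acute{\Gamma}_I(R)$. The only genuine obstacle is the reverse implication in the unit characterization of part (a), where the finite generation of $I$ is essential in invoking the determinant trick; everything else is a direct translation between $R$ and $R/A$.
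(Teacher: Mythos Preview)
Your argument is correct and is precisely the ``straightforward'' verification the paper has in mind: the only nontrivial step is the implication $xI = I \Rightarrow \bar{x}$ is a unit in $R/Ann_R(I)$, and you handle it exactly as the paper does in Theorem~\ref{2.19} and Lemma~\ref{2.8}, via \cite[Theorem~75]{4}. The paper offers no further details beyond declaring the proof straightforward, so there is nothing to compare against.
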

\begin{proof}
This is straightforward.
\end{proof}

An $R$-module $M$ is said to be a \emph{comultiplication module} if for every submodule $N$ of $M$ there exists an ideal $I$ of $R$ such that $N = Ann_M(I)$, equivalently, for each submodule $N$ of $M$, we have $N = Ann_M(Ann_R(N))$ \cite{AF07}. $R$ is said to be a \emph{comultiplication ring} if $R$ is a comultiplication $R$-module.
\begin{thm}\label{2.14}
Let $I$ be a proper ideal of $R$. Then $V(\acute{\Gamma}_I(R))=V(\Gamma_{Ann(I)}(R))$ if one of the following conditions hold.
\begin{itemize}
  \item [(a)] $R$ is a comultiplication ring.
  \item [(b)] $R/Ann_R(I)=Z(R/Ann_R(I)) \cup U(R/Ann_R(I))$.
\end{itemize}
\end{thm}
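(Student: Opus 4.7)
The plan is to prove both inclusions. One direction, namely $V(\Gamma_{Ann(I)}(R)) \subseteq V(\acute{\Gamma}_I(R))$, holds without any extra hypothesis and is already established in Proposition \ref{2.13}(a), so I only need the reverse inclusion under each of (a) and (b). Fix $x \in V(\acute{\Gamma}_I(R))$, so by definition $x \notin Ann_R(I)$ and $xI \neq I$; since $xI \subseteq I$ automatically, this really says $xI \subsetneq I$. The goal in both cases is to produce $y \in R \setminus Ann_R(I)$ with $xy \in Ann_R(I)$, i.e., with $xyI = 0$.

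For case (a), the key observation is that when $R$ is a comultiplication ring, every ideal $J$ satisfies $J = Ann_R(Ann_R(J))$, so the map $J \mapsto Ann_R(J)$ is injective (indeed, order-reversing) on ideals. Applied to $xI$ and to $I$: if we had $Ann_R(xI) = Ann_R(I)$, then $xI = Ann_R(Ann_R(xI)) = Ann_R(Ann_R(I)) = I$, contradicting $xI \subsetneq I$. Hence $Ann_R(xI) \supsetneq Ann_R(I)$, and any $y \in Ann_R(xI) \setminus Ann_R(I)$ satisfies $y \notin Ann_R(I)$ and $(xy)I = x(yI) \subseteq \ldots$ — more directly, $yxI = 0$ gives $xy \in Ann_R(I)$. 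Thus $x \in V(\Gamma_{Ann(I)}(R))$.

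For case (b), I would pass to the quotient $\bar{R} := R/Ann_R(I)$. Set $\bar{x} := x + Ann_R(I)$; then $\bar{x} \neq 0$ since $x \notin Ann_R(I)$. The first step is to rule out that $\bar{x}$ is a unit: if $\bar{x}\bar{r} = \bar{1}$ for some $r \in R$, then $xr - 1 \in Ann_R(I)$, so $(xr-1)I = 0$, giving $xrI = I$ and hence $I = xrI \subseteq xI \subseteq I$, forcing $xI = I$, a contradiction. By the hypothesis $\bar{R} = Z(\bar{R}) \cup U(\bar{R})$, this means $\bar{x} \in Z(\bar{R})$, so there exists $\bar{y} \neq 0$ in $\bar{R}$ with $\bar{x}\bar{y} = 0$. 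Lifting, $y \in R \setminus Ann_R(I)$ with $xy \in Ann_R(I)$, so $x \in V(\Gamma_{Ann(I)}(R))$.

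The only subtle point in either case is recognizing the correct algebraic tool: in (a) that the comultiplication property is precisely what makes the annihilator injective on ideals (so $xI \subsetneq I$ forces a proper containment of annihilators), and in (b) that showing $\bar{x}$ fails to be a unit in $\bar{R}$ is equivalent to $xI \neq I$. Everything else is a direct lift/descent through the quotient $R/Ann_R(I)$ or a one-line annihilator manipulation, so no further obstacles are expected.
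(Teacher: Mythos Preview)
Your proposal is correct and follows essentially the same route as the paper's proof: one inclusion is Proposition~\ref{2.13}(a), and for the reverse you use in (a) that the comultiplication hypothesis makes $J \mapsto Ann_R(J)$ injective (so $xI \subsetneq I$ forces $Ann_R(xI) \supsetneq Ann_R(I)$ and yields the needed $y$), and in (b) that $\bar{x}$ cannot be a unit in $R/Ann_R(I)$ (else $xI = I$), hence is a zero-divisor there. This matches the paper's argument step for step, with your write-up being slightly more explicit about why the containment of annihilators is proper.
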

\begin{proof}
Clearly $V(\Gamma_{Ann(I)}(R))\subseteq V(\acute{\Gamma}_I(R))$.

(a) Let $x \in V(\acute{\Gamma}_I(R))$. Then $xI \not =0$ and $xI \not = I$. Since $R$ is a comultiplication ring, this implies that $Ann_R(xI) \not = Ann_R(I)$. Thus there exists $y \in Ann_R(xI) \setminus Ann_R(I)$. Therefore,  $x \in V(\Gamma_{Ann(I)}(R))$.

(b) Let $x \in V(\acute{\Gamma}_I(R))$. Then $xI \not =0$ and $xI \not = I$. By assumption, $x+Ann_R(I) \in Z(R/Ann_R(I))$ or $x+Ann_R(I) \in U(R/Ann_R(I))$. If $x+Ann_R(I) \in Z(R/Ann_R(I))$, then there exists $y \in R \setminus Ann_R(I)$ such that $xy \in Ann_R(I)$. Therefore, $x \in V(\Gamma_{Ann(I)}(R))$. If $x+Ann_R(I) \in U(R/Ann_R(I))$, then there exists $z+Ann_R(I) \in R/Ann_R(I)$ such that $xz+Ann_R(I)=1+Ann_R(I)$. Thus $1=xz+a$ for some $a \in Ann_R(I)$. Now we have $I=1I=(xz+a)I=xzI \subseteq xI$, a contradiction.
\end{proof}

\begin{thm}\label{2.15}
Let $I \subseteq J$ be non-zero ideals of $R$. Then we have the following.
\begin{itemize}
  \item [(a)] If $R/Ann_R(J)=Z(R/Ann_R(J)) \bigcup U(R/Ann_R(J))$, then $V(\acute{\Gamma}_I(R)) \subseteq V(\acute{\Gamma}_J(R))$.
  \item [(b)] If $dim(R)=0$, then $V(\acute{\Gamma}_I(R)) \subseteq V(\acute{\Gamma}_J(R))$. In particular, this holds if R is a finite
      commutative ring.
\end{itemize}
\end{thm}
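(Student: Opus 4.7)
The plan is to reduce (b) to (a), and to prove (a) by the standard case split coming from the hypothesis, in the style already used in the proof of Theorem \ref{2.14}. Fix $x \in V(\acute{\Gamma}_I(R))$, so $x \notin Ann_R(I)$ and $xI \neq I$. Since $I \subseteq J$ gives $Ann_R(J) \subseteq Ann_R(I)$, I immediately have $x \notin Ann_R(J)$, and the only remaining thing to verify for $x \in V(\acute{\Gamma}_J(R))$ is $xJ \neq J$.

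By the hypothesis of (a), $x + Ann_R(J)$ is either a unit or a zero-divisor in $R/Ann_R(J)$. Suppose first it is a unit: then $1 - xz \in Ann_R(J) \subseteq Ann_R(I)$ for some $z \in R$, so $xzI = (1 - (1-xz))I = I$, hence $I \subseteq xI$; combined with the automatic $xI \subseteq I$, this forces $xI = I$, contradicting the choice of $x$. This is exactly the maneuver from the proof of Theorem \ref{2.14}(b). Suppose instead $x + Ann_R(J)$ is a zero-divisor: pick $y \in R \setminus Ann_R(J)$ with $xy \in Ann_R(J)$, so $xyJ = 0$. If $xJ$ were equal to $J$, then multiplying by $y$ would give $yJ = y(xJ) = (xy)J = 0$, making $y \in Ann_R(J)$, a contradiction. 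Thus $xJ \neq J$, and $x \in V(\acute{\Gamma}_J(R))$.

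For (b), I would verify that the hypothesis of (a) is automatic once $\dim(R) = 0$. Since $R/Ann_R(J)$ is a quotient of $R$, it is also zero-dimensional. The key lemma is that in any zero-dimensional commutative ring $S$ every element is either a unit or a zero-divisor; I would prove this via the standard characterization of zero-dimensional rings, namely that for each $a \in S$ there exist $n \geq 1$ and $b \in S$ with $a^n = a^{n+1}b$, equivalently $a^n(1 - ab) = 0$. If $a$ is a non-zero-divisor, then so is $a^n$, forcing $1 = ab$. Applying this to $S = R/Ann_R(J)$ gives exactly the hypothesis of (a), and the inclusion follows. The ``in particular'' clause is immediate, since a finite commutative ring is Artinian and hence zero-dimensional.

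The main potential obstacle is justifying the fact that a zero-dimensional commutative ring satisfies $S = Z(S) \cup U(S)$, which is not entirely trivial in the non-Noetherian setting. The cleanest route is the characterization above; an alternative is to pass to $S/\mathrm{nil}(S)$, which is von Neumann regular, handle the reduced case there via $a = a^2 b$, and then lift units across the nilradical (using that a unit modulo a nil ideal is a unit). Everything else is bookkeeping with the definitions and the inclusion $Ann_R(J) \subseteq Ann_R(I)$.
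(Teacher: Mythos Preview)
Your argument is correct and follows the same overall strategy as the paper: reduce (b) to (a) via the fact that a zero-dimensional quotient satisfies $S = Z(S)\cup U(S)$, and handle (a) by the unit/zero-divisor case split already used in Theorem~\ref{2.14}(b). The only difference is packaging: for (a) the paper simply invokes Theorem~\ref{2.14}(b) together with \cite[Theorem~2.8]{AS13}, whereas you unfold the argument directly, using $Ann_R(J)\subseteq Ann_R(I)$ to push the ``unit'' contradiction back to $xI=I$ and the ``zero-divisor'' witness forward to $xJ\neq J$. Your version has the advantage of being self-contained (no external citation needed), while the paper's version is shorter but relies on an outside reference; mathematically they are the same proof.
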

\begin{proof}
(a) This follows from Theorem \ref{2.14} (b) and \cite[Theorem 2.8]{AS13}.

(b) $dim(R)=0$ implies that $dim (R/J)=0$. It follows that
 $$
 R/Ann_R(J)=Z(R/Ann_R(J)) \bigcup U(R/Ann_R(J)).
 $$
 Now the result follows from part (a).
\end{proof}

\begin{prop}\label{2.16}
Let $I$ be a non-zero ideal of a commutative ring $R$ with $R=Z(R) \cup U(R)$ and  $V(\acute{\Gamma}_I(R))=V(\acute{\Gamma}(R))$. Then $Ann_R(I) = 0$.
\end{prop}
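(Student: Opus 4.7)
The plan is to argue by contradiction. Suppose $Ann_R(I) \neq 0$ and pick $0 \neq x \in Ann_R(I)$. The first step is to verify that $x$ lies in $W^*(R) = V(\acute{\Gamma}(R))$, i.e.\ that $x$ is a nonzero non-unit of $R$. It is nonzero by choice; and it cannot be a unit, because then $xI = 0$ would force $I = x^{-1}(xI) = 0$, contradicting the hypothesis $I \neq 0$. Invoking the decomposition $R = Z(R) \cup U(R)$ confirms that $x \in Z(R) \setminus U(R) \subseteq W^*(R)$.

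The second step is then immediate: by the hypothesis $V(\acute{\Gamma}_I(R)) = V(\acute{\Gamma}(R))$, the element $x$ belongs to $V(\acute{\Gamma}_I(R))$, so by Definition \ref{2.1} we must have $x \in R \setminus Ann_R(I)$, contradicting the choice of $x$. Hence $Ann_R(I) = 0$.

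There is no substantive obstacle in this proof: it is essentially a one-line unpacking of the definition of $V(\acute{\Gamma}_I(R))$, pivoting on the trivial observation that $Ann_R(I) \cap U(R) = \emptyset$ whenever $I \neq 0$. The hypothesis $R = Z(R) \cup U(R)$ is therefore not logically essential; it merely organizes the dichotomy on $x$. In fact, the conclusion can alternatively be extracted as a corollary of Theorem \ref{2.19}: equality of the vertex sets in particular yields $V(\acute{\Gamma}(R)) \subseteq V(\acute{\Gamma}_I(R))$, which by that theorem forces either $Ann_R(I) = 0$ or $R$ a field; in the latter case $I = R$ and $Ann_R(I) = Ann_R(R) = 0$ as well, so $Ann_R(I) = 0$ in both situations.
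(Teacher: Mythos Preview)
Your proof is correct and follows essentially the same idea as the paper's: both observe that $V(\acute{\Gamma}_I(R)) \subseteq R \setminus Ann_R(I)$, so the equality of vertex sets forces $Ann_R(I) \subseteq R \setminus W^*(R) = \{0\} \cup U(R)$, which is $\{0\}$ since $I \neq 0$. You phrase this by contradiction, the paper phrases it directly, but the content is identical. Your remark that the hypothesis $R = Z(R) \cup U(R)$ is logically superfluous is correct and worth noting, as is your alternative derivation via Theorem~\ref{2.19}.
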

\begin{proof}
Suppose that $V(\acute{\Gamma}_I(R))=V(\acute{\Gamma}(R))$. Since $V(\acute{\Gamma}_I(R)) \subseteq R\setminus Ann_R(I)$, we have $V(\acute{\Gamma}(R))\subseteq R\setminus Ann_R(I)$. Thus $Ann_R(I) \subseteq R \setminus V(\acute{\Gamma}(R))=\{0\} \cup U(R)$ by hypothesis. Therefore, $Ann_R(I) = 0$.
\end{proof}

\section{secondal ideals}
In this section, we will study the ideal-based cozero-divisor graph with respect to secondal ideals.

The element $a \in R$ is called \emph{prime to an ideal} $I$ of $R$ if $ra \in I$ (where $r \in R$) implies that $r \in I$.  The set of elements of $R$ which are not prime to $I$ is denoted by $S(I)$. A proper ideal $I$ of $R$ is said to be \emph{primal} if $S(I)$ is an ideal of $R$ \cite{8}.

A non-zero submodule $N$  of an $R$-module $M$ is said to be \emph{secondal} if $W_R(N) = \{a \in R\,:\,aN \neq N\}$ is an ideal of $R$ \cite{AF101}. A \emph{secondal ideal} is defined similarly when $N = I$ is an ideal of $R$. In this case, we say $I$ is $P$-\emph{secondal}, where $P = W(I)$ is a prime ideal of $R$.

\begin{lem}\label{3.2}
Let $I$ be a non-zero ideal of $R$. Then the following hold.
\begin{itemize}
 \item [(a)] $Ann_R(I) \subseteq W(I)$.
  \item [(b)] $Z_R(R / Ann_R(I)) \subseteq W(I)$.
 \item [(c)] $ V(\acute{\Gamma}_I(R)) = W(I) \setminus Ann_R(I)$. In particular, $ V(\acute{\Gamma}_I(R)) \cup Ann_R(I) = W(I)$.
  \item [(d)] If $Ann_R(I)$ is a radical ideal of $R$, then $\bigcup_{P\in Min(Ann_R(I))} P \subseteq W(I)$.
\end{itemize}
\end{lem}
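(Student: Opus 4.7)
My plan is to handle the four parts in sequence: (a), (b), and (c) come directly from the definitions, and (d) rests on a standard localization-at-a-minimal-prime fact.

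For (a), since $I \neq 0$ by hypothesis, any $a \in Ann_R(I)$ satisfies $aI = 0 \neq I$ and hence lies in $W(I)$. For (b), if $a \in Z_R(R/Ann_R(I))$ there exists $b \in R \setminus Ann_R(I)$ with $abI = 0$; I would argue by contradiction, assuming $aI = I$ and multiplying by $b$ to obtain $bI = abI = 0$, forcing $b \in Ann_R(I)$ against the choice of $b$. Part (c) is almost tautological: Definition \ref{2.1} gives $V(\acute{\Gamma}_I(R)) = \{x \in R \setminus Ann_R(I) : xI \neq I\} = W(I) \setminus Ann_R(I)$, and the ``in particular'' statement then follows from (a), which supplies $Ann_R(I) \subseteq W(I)$.

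The substantive part is (d). Given $p \in P$ for some $P \in Min(Ann_R(I))$, the plan is to invoke the classical consequence of localizing at a minimal prime: $PR_P$ is the unique prime of $R_P$ containing $Ann_R(I)R_P$, so $\sqrt{Ann_R(I)R_P} = PR_P$, and clearing denominators in $(p/1)^n \in Ann_R(I)R_P$ yields an integer $n \geq 1$ and an element $s \in R \setminus P$ with $sp^n \in Ann_R(I)$. Now suppose for contradiction that $pI = I$; iterating gives $p^n I = I$, whence $sI = sp^n I = 0$, forcing $s \in Ann_R(I) \subseteq P$ and contradicting the choice of $s$. Hence $pI \neq I$, i.e.\ $p \in W(I)$.

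The main obstacle is invoking the correct localization fact in (d); once that is in place the contradiction is a single line. I note that my argument does not actually use the assumption that $Ann_R(I)$ is radical, so I suspect the authors include it either for expository clarity (to ensure $Ann_R(I) = \bigcap_{P \in Min(Ann_R(I))} P$) or because they proceed instead through (b), relying on the Noetherian-style identity that expresses $\bigcup_{P \in Min(J)} P$ via zero-divisors modulo $J$ when $J$ is radical.
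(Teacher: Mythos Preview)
Your arguments for (a), (b), and (c) coincide with the paper's, essentially verbatim.

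For (d) you take a genuinely different route. The paper does exactly what you guessed in your final sentence: it invokes the identity $Z_R(R/J)=\bigcup_{P\in Min(J)}P$ for a radical ideal $J$ (cited from Kaplansky), applies it with $J=Ann_R(I)$, and then appeals to part (b). Your localization argument is more self-contained and, as you observed, does not require the radical hypothesis on $Ann_R(I)$; in that sense it proves a slightly stronger statement. The paper's approach, on the other hand, makes the logical dependence on (b) transparent and explains why the radical hypothesis appears in the statement.
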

\begin{proof}
(a) Let $r \in Ann_R(I)$. Then $rI = 0\neq I$. Thus $r \in W(I)$.

(b) Let $x \in Z_R(R / Ann_R(I))$ and $x \not \in W(I)$. Then there exists $y \in R \setminus Ann_R(I)$ such that $xyI = 0$. Hence $xI = I$ implies that $yI = 0$, a contradiction.

(c) Let $r \in V(\acute{\Gamma}_I(R))$. Then $r \in R \setminus Ann_R(I)$ and $rI \neq I$; hence $r \in W(I) \setminus Ann_R(I)$. Thus $V(\acute{\Gamma}_I(R)) \subseteq W(I) \setminus Ann_R(I)$. Conversely, we assume that $x \in W(I) \setminus Ann_R(I)$. So $xI \neq I$ and $xI \neq 0$. Then $x \in V(\acute{\Gamma}_I(R))$, so we have equality.

(d) By \cite [Exer 13, page 63]{4}, $Z_R (R / I) = \bigcup_{P\in Min(I)} P$, where $I$ is a radical ideal of $R$. Thus $Z_R (R / Ann_R(I)) =  \bigcup_{P\in Min(Ann_R(I))} P$. Hence $\bigcup_{P\in Min(Ann_R(I))} P \subseteq W(I)$ by part (b).
\end{proof}

\begin{rem}\label{3.3}
Let $R = \Bbb Z$, $I = 2\Bbb Z$. Then $Z_R( R / Ann_R(I)) = Z_R(R) = {0}$ and $W(I) = \Bbb Z \setminus \{-1 , 1\}$. Therefore the converse of part (b) of the above lemma is not true in general.
\end{rem}

\begin{prop}\label{3.4}
Let $I$ and $P$ be ideals of $R$ with $Ann_R(I) \subseteq P$. Then $I$ is a P-secondal ideal of $R$ if only if $V(\acute{\Gamma}_I(R)) = P \setminus Ann_R(I)$.
\end{prop}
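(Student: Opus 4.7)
The plan is to reduce both directions to Lemma \ref{3.2}(c), which gives the identity $V(\acute{\Gamma}_I(R)) = W(I) \setminus Ann_R(I)$, together with Lemma \ref{3.2}(a), which gives $Ann_R(I) \subseteq W(I)$. Once this identity is in hand, the proposition essentially says that $W(I)$ equals $P$ if and only if $W(I) \setminus Ann_R(I)$ equals $P \setminus Ann_R(I)$, which under the hypothesis $Ann_R(I) \subseteq P$ is just set-theoretic bookkeeping.

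For the forward direction, I would assume $I$ is $P$-secondal, so by definition $W(I) = P$. Substituting into Lemma \ref{3.2}(c) immediately gives $V(\acute{\Gamma}_I(R)) = P \setminus Ann_R(I)$, which is all that is needed.

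For the reverse direction, I would assume $V(\acute{\Gamma}_I(R)) = P \setminus Ann_R(I)$. Combining with Lemma \ref{3.2}(c) yields $W(I) \setminus Ann_R(I) = P \setminus Ann_R(I)$. Taking the union of both sides with $Ann_R(I)$, and using that $Ann_R(I) \subseteq W(I)$ (Lemma \ref{3.2}(a)) as well as the hypothesis $Ann_R(I) \subseteq P$, I recover $W(I) = P$. Thus $W(I)$ is an ideal, and to conclude that $I$ is $P$-secondal I still need $P$ to be prime; this follows automatically because $R \setminus W(I)$ is multiplicatively closed (if $aI = I$ and $bI = I$ then $abI = I$), so the complement of the ideal $W(I)$ is a multiplicative set, forcing $W(I) = P$ to be prime.

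The argument is almost entirely formal manipulation once Lemma \ref{3.2} is invoked; the only subtle point I anticipate is making sure the primality of $P$ in the definition of $P$-secondal is addressed in the reverse direction, rather than just the condition that $W(I)$ is an ideal. Handling this via the multiplicative closure of the complement of $W(I)$ is the cleanest route and avoids any reliance on external results.
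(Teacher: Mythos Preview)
Your proposal is correct and is exactly the natural unpacking of the paper's one-word proof ``Straightforward'': both directions reduce to Lemma~\ref{3.2}(c) together with the containment $Ann_R(I)\subseteq W(I)$ from Lemma~\ref{3.2}(a), and the hypothesis $Ann_R(I)\subseteq P$ is what lets you pass from $W(I)\setminus Ann_R(I)=P\setminus Ann_R(I)$ back to $W(I)=P$. Your closing remark about primality is a careful touch, but note that it is already implicit in the paper's definition: once $W(I)=P$ is an ideal, $I$ is secondal by definition, and the primality of $W(I)$ is recorded there as an automatic consequence (precisely via the multiplicative closure of $R\setminus W(I)$ that you spell out), so you need not treat it as a separate obligation.
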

\begin{proof}
Straightforward.
\end{proof}

\begin{thm}\label{3.5}
Let $I$ be an ideal of $R$. Then $I$ is a secondal ideal of $R$ if and only if $V(\acute{\Gamma}_I(R)) \cup Ann_R(I) $ is an (prime) ideal of $R$.
\end{thm}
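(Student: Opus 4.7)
The plan is to collapse the theorem onto Lemma~\ref{3.2}(c), which already identifies $V(\acute{\Gamma}_I(R)) \cup Ann_R(I)$ with $W(I) = \{a \in R : aI \neq I\}$. Once this identification is in hand, the equivalence in the statement reduces directly to the definition of a secondal ideal, namely that $W(I)$ is an ideal of $R$.

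For the forward direction, I would assume $I$ is $P$-secondal, so by definition $W(I) = P$ is (a prime) ideal of $R$; Lemma~\ref{3.2}(c) then gives $V(\acute{\Gamma}_I(R)) \cup Ann_R(I) = W(I) = P$, settling the claim. For the converse, if $V(\acute{\Gamma}_I(R)) \cup Ann_R(I)$ is an ideal of $R$, then again by Lemma~\ref{3.2}(c) it coincides with $W(I)$, so $W(I)$ is an ideal, which is exactly the condition that $I$ be secondal.

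The only nontrivial point is the parenthetical assertion that, under these equivalent conditions, the ideal $W(I)$ is automatically prime. I would verify this by the standard multiplicative contrapositive: if $a, b \notin W(I)$, then $aI = I$ and $bI = I$, whence $(ab)I = a(bI) = aI = I$ and so $ab \notin W(I)$. Since $1 \cdot I = I$ implies $1 \notin W(I)$, the ideal $W(I)$ is proper, so primality follows. I do not anticipate any obstacle here; the whole theorem is essentially a reformulation of Lemma~\ref{3.2}(c) together with this one-line primality check.
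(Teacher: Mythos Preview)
Your proposal is correct and follows essentially the same route as the paper: both directions reduce immediately to Lemma~\ref{3.2}(c) and the definition of secondal. Your explicit verification that $W(I)$ is prime whenever it is an ideal is a welcome addition; the paper simply asserts this (it is folded into the sentence defining ``$P$-secondal'') without spelling out the one-line argument you give.
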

\begin{proof}
Let $I$ be a secondal ideal. Then $W(I)$ is a prime ideal and by Lemma \ref{3.2}(c) $V(\acute{\Gamma}_I(R)) \cup Ann_R(I) = W(I)$. Thus $V(\acute{\Gamma}_I(R)) \cup Ann_R(I)$ is an ideal of $R$. Conversely, suppose that $V(\acute{\Gamma}_I(R)) \cup Ann_R(I)$ is a (prime) ideal. Then by Lemma \ref{3.2}(c) , $V(\acute{\Gamma}_I(R)) \cup Ann_R(I) = W(I)$ is a prime ideal. Hence $I$ is a secondal ideal.
\end{proof}

\begin{thm}\label{3.6}
Let $I$ and $J$ be P-secondal ideals of $R$. Then $V(\acute{\Gamma}_I(R)) = V(\acute{\Gamma}_J(R))$ if and only if $Ann_R(I) = Ann_R(J)$.
\end{thm}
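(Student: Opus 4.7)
The plan is to reduce everything to the identity $V(\acute{\Gamma}_I(R)) \cup Ann_R(I) = W(I)$ from Lemma \ref{3.2}(c), noting that the union there is in fact a disjoint union (since $V(\acute{\Gamma}_I(R)) = W(I) \setminus Ann_R(I)$ by the same lemma, and $Ann_R(I) \subseteq W(I)$ by Lemma \ref{3.2}(a)).

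First, I would record the consequence of the $P$-secondal hypothesis: $W(I) = P = W(J)$. Combined with Lemma \ref{3.2}(c), this gives the two clean formulas
\[
V(\acute{\Gamma}_I(R)) = P \setminus Ann_R(I), \qquad V(\acute{\Gamma}_J(R)) = P \setminus Ann_R(J),
\]
with $Ann_R(I), Ann_R(J) \subseteq P$. Both equalities express $Ann_R(I)$ and $Ann_R(J)$ as the complement of the vertex set inside $P$, i.e.\ $Ann_R(I) = P \setminus V(\acute{\Gamma}_I(R))$ and likewise for $J$.

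For the ($\Leftarrow$) direction, $Ann_R(I) = Ann_R(J)$ plugged into the two formulas above immediately yields $V(\acute{\Gamma}_I(R)) = V(\acute{\Gamma}_J(R))$. For the ($\Rightarrow$) direction, assuming equality of vertex sets, I take complements inside $P$: since $Ann_R(I) = P \setminus V(\acute{\Gamma}_I(R))$ and $Ann_R(J) = P \setminus V(\acute{\Gamma}_J(R))$, the hypothesis gives $Ann_R(I) = Ann_R(J)$.

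There is no real obstacle here; the entire proof is just bookkeeping once one observes that the $P$-secondal assumption forces the ambient set $W(I) = W(J) = P$ to coincide, so the disjoint decomposition $P = V(\acute{\Gamma}_I(R)) \sqcup Ann_R(I)$ makes the annihilator and the vertex set determine each other. This is presumably why the authors list the proof as essentially a one-liner following Lemma \ref{3.2}.
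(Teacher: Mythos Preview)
Your proof is correct and follows essentially the same route as the paper. The only cosmetic difference is that the paper packages your key identity $V(\acute{\Gamma}_I(R)) = P \setminus Ann_R(I)$ as Proposition~\ref{3.4} and cites it directly, whereas you rederive it from Lemma~\ref{3.2}(c) together with the definition of $P$-secondal; the complement-taking argument in $P$ is identical.
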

\begin{proof}
By Lemma \ref{3.2} (a), $Ann_R(I) \subseteq P$ and $Ann_R(J) \subseteq P$. It then follows from Proposition \ref{3.4} that $V(\acute{\Gamma}_I(R)) = V(\acute{\Gamma}_J(R))$ if and only if $P \setminus Ann_R(I) = P \setminus Ann_R(J)$; and this holds if and only if $Ann_R(I) = Ann_R(J)$.
\end{proof}

\begin{lem}\label{3.7}
Let $N$ be a secondary submodule of an $R$-module $M$. Then $\sqrt{Ann_R(N)} = W(N)$.
\end{lem}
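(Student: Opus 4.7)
The plan is to prove the equality $\sqrt{Ann_R(N)} = W(N)$ by two direct inclusions, using only the defining dichotomy of a secondary submodule: for every $r \in R$, either $rN = N$ or $r^n N = 0$ for some positive integer $n$.

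For the inclusion $\sqrt{Ann_R(N)} \subseteq W(N)$, I would take $r \in \sqrt{Ann_R(N)}$, so there exists $n \geq 1$ with $r^n N = 0$. Since $N \neq 0$ (secondary submodules are non-zero by definition), $r^n N \neq N$. If it were the case that $rN = N$, then by induction $r^n N = N$, contradicting $r^n N = 0 \neq N$. Hence $rN \neq N$, which means $r \in W(N)$.

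For the reverse inclusion $W(N) \subseteq \sqrt{Ann_R(N)}$, I would take $r \in W(N)$, so $rN \neq N$. By the secondary hypothesis on $N$, the dichotomy forces $r^n N = 0$ for some positive integer $n$, i.e., $r^n \in Ann_R(N)$, hence $r \in \sqrt{Ann_R(N)}$.

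Both inclusions are essentially immediate once the secondary dichotomy is invoked, so there is no real obstacle; the only point worth being careful about is the step $rN = N \Rightarrow r^n N = N$, which is a trivial induction but is what rules out the possibility of $r$ annihilating a power of $N$ while still having $rN = N$. Note that no finiteness or Noetherian hypothesis on $N$ or $R$ is needed for this argument.
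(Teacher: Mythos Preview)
Your proof is correct and follows essentially the same approach as the paper: both directions are handled by invoking the secondary dichotomy together with the fact that $N\neq 0$, the only cosmetic difference being that the paper proves the inclusions in the opposite order and phrases the step $rN=N\Rightarrow r^nN=N\neq 0$ as an immediate contradiction rather than spelling out the induction.
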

\begin{proof}
Let $x \in W(N)$. Then $xN \neq N$. Since $N$ is a secondary $R$-module, there exists a positive integer $n$ such that $x^{n}N = 0$. Thus $x \in \sqrt{Ann_R(N)} $. Hence $W(I) \subseteq \sqrt{Ann_R(I)}$. To see the reverse inclusion, let $x \in \sqrt{Ann_R(N)}$ and $x \not\in W(N)$. Then $x^{n}N = 0$ for some positive integer $n$ and $xN = N$. Therefore $N = 0$, a contradiction.
\end{proof}

\begin{thm}\label{3.8}
Let $I$ be an ideal of $R$. Then $I$ is secondary ideal if and only if $V(\acute{\Gamma}_I(R)) = \sqrt{Ann_R(I)} \setminus Ann_R(I)$.
\end{thm}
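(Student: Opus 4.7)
The plan is to reduce everything to two facts already proved: Lemma \ref{3.2}(c), which says $V(\acute{\Gamma}_I(R)) \cup Ann_R(I) = W(I)$, and Lemma \ref{3.7}, which says that for a secondary ideal $I$ one has $W(I) = \sqrt{Ann_R(I)}$. Throughout we tacitly use that $I \neq 0$ (secondary ideals are nonzero by definition, and the graph is defined only when $Ann_R(I) \neq R$, giving the non-triviality we need on the other side).

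For the forward direction, assume $I$ is secondary. Apply Lemma \ref{3.7} (with $M = R$ and $N = I$) to obtain $W(I) = \sqrt{Ann_R(I)}$. Then by Lemma \ref{3.2}(c),
\[
V(\acute{\Gamma}_I(R)) = W(I) \setminus Ann_R(I) = \sqrt{Ann_R(I)} \setminus Ann_R(I),
\]
which is the desired equality.

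For the converse, suppose $V(\acute{\Gamma}_I(R)) = \sqrt{Ann_R(I)} \setminus Ann_R(I)$. Using Lemma \ref{3.2}(c) and the containment $Ann_R(I) \subseteq \sqrt{Ann_R(I)}$, we compute
\[
W(I) = V(\acute{\Gamma}_I(R)) \cup Ann_R(I) = \bigl(\sqrt{Ann_R(I)} \setminus Ann_R(I)\bigr) \cup Ann_R(I) = \sqrt{Ann_R(I)}.
\]
Now verify the secondary property directly. Let $r \in R$ be arbitrary. If $rI = I$ there is nothing to do. Otherwise $rI \neq I$, so $r \in W(I) = \sqrt{Ann_R(I)}$, and hence $r^n \in Ann_R(I)$ for some positive integer $n$, i.e. $r^n I = 0$. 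Thus for every $r \in R$ either $rI = I$ or $r^n I = 0$ for some $n$, which means $I$ is a secondary ideal.

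The only real step requiring care is the converse: one must notice that the set-theoretic equality $V(\acute{\Gamma}_I(R)) = \sqrt{Ann_R(I)} \setminus Ann_R(I)$ is exactly strong enough, via Lemma \ref{3.2}(c), to force $W(I) = \sqrt{Ann_R(I)}$, and that this in turn is the definitional characterization of secondariness of $I$. No genuine obstacle is expected; the argument is essentially bookkeeping around the two cited lemmas.
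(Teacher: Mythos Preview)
Your proof is correct and follows essentially the same route as the paper's. The only cosmetic difference is that in the forward direction the paper invokes Proposition~\ref{3.4} (observing that a secondary ideal is $\sqrt{Ann_R(I)}$-secondal) whereas you go straight through Lemma~\ref{3.2}(c); and in the converse the paper argues by contradiction on a single element while you first deduce $W(I)=\sqrt{Ann_R(I)}$ globally and then verify secondariness---but these are the same argument rearranged.
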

\begin{proof}
If $I$ is secondary, then $\sqrt{Ann_R(I)} = W(I)$ by Lemma \ref{3.7}. Hence $I$ is a $\sqrt{Ann_R(I)}$-secondal ideal of $R$. Then Proposition \ref{3.4} implies that $V(\acute{\Gamma}_I(R)) = \sqrt{Ann_R(I)} \setminus Ann_R(I)$. Conversely, suppose that $x \in R, xI \neq I$, and $x \not \in \sqrt{Ann_R(I)}$. Then $x \in W(I)$ and $x \not \in Ann_R(I)$. Thus $x \in V(\acute{\Gamma}_I(R))$ and so $x \in \sqrt{Ann_R(I)} \setminus Ann_R(I)$ by assumption, a contradiction.
\end{proof}

\begin{defn}\label{3.9}
Let $I$ be an ideal of $R$. We say that an ideal $J$ of $R$ is \emph{second} to $I$ if $IJ = I$.
\end{defn}

\begin{prop}\label{3.10}
Let $I$ be an ideal of $R$. If $I$ is not secondal, then there exist $x , y \in V(\acute{\Gamma}_I(R))$ such that $<x,y>$ is second to $I$.
\end{prop}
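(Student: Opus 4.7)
The plan is to exploit Lemma~\ref{3.2}(c), which identifies $V(\acute{\Gamma}_I(R))$ with $W(I)\setminus Ann_R(I)$, so that the hypothesis that $I$ is not secondal translates directly into the statement that $W(I)$ fails to be an ideal of $R$. My first task will be to pinpoint exactly how this failure must occur. I expect it to be forced into the closure-under-addition axiom, because $W(I)$ is automatically closed under the other ideal operations: $0 \in W(I)$ (assuming $I \neq 0$, which is part of the secondal setup), $-x \in W(I)$ whenever $x \in W(I)$ (since $(-x)I = xI$ as sets), and $rx \in W(I)$ for any $r \in R$ and $x \in W(I)$ (since $xI \subsetneq I$ forces $rxI \subseteq xI \subsetneq I$). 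Consequently there must exist $x, y \in W(I)$ with $x+y \notin W(I)$, i.e.\ with $(x+y)I = I$.

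Given such $x, y$, I would then establish two things. First, that $\langle x, y\rangle$ is second to $I$: we have $\langle x,y\rangle I = xI + yI \supseteq (x+y)I = I$ and $\langle x,y\rangle I \subseteq I$ trivially, so $\langle x,y\rangle I = I$. Second, that both $x$ and $y$ belong to $V(\acute{\Gamma}_I(R)) = W(I)\setminus Ann_R(I)$; they already sit in $W(I)$, so it suffices to rule out $x \in Ann_R(I)$ (and by symmetry $y$). If $xI = 0$ then $yI = xI + yI \supseteq (x+y)I = I$, forcing $yI = I$ and contradicting $y \in W(I)$.

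The only mild obstacle is the preliminary bookkeeping: carefully verifying that the failure of $W(I)$ to be an ideal is really forced into the addition axiom, and arranging matters so that the resulting $x, y$ automatically avoid $Ann_R(I)$. Both are one-line checks once the right viewpoint is adopted, and then the two remaining conclusions each follow from the single identity $(x+y)I = I$.
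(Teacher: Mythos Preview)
Your proposal is correct and follows essentially the same approach as the paper: both arguments use Lemma~\ref{3.2}(c) to translate the hypothesis into ``$W(I)$ is not an ideal,'' pick $x,y\in W(I)$ whose sum (respectively difference) falls outside $W(I)$, deduce $\langle x,y\rangle I=I$, and then argue that neither $x$ nor $y$ can lie in $Ann_R(I)$ by the same containment trick. The only cosmetic differences are that you use $x+y$ where the paper uses $x-y$, and you explicitly verify that closure under addition is the \emph{only} ideal axiom that $W(I)$ can fail, whereas the paper takes this for granted.
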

\begin{proof}
Suppose that $I$ is an ideal of $R$ such that it is not secondal. Then by Lemma \ref{3.2} (c), $V(\acute{\Gamma}_I(R)) \cup Ann_R(I) = W(I)$ is not an ideal of $R$, so there exist $x,y \in W(I)$ with $x - y \not \in W(I)$ and so $(x - y)I = I$. Hence $<x,y>I = I$. Now we claim that $x, y \not \in Ann_R(I)$. Otherwise, we have $x \in Ann_R(I)$ or $y \in Ann_R(I)$. If $x, y \in Ann_R(I)$, then $x-y \in Ann_R(I) \subseteq W(I)$, a contradiction. If $x \in Ann_R(I)$ and $y \not \in Ann_R(I)$, then $I = (x-y)I \subseteq xI+yI = 0+yI$, a contradiction. Similarly, we get a contradiction when $x\not \in Ann_R(I)$ and $y \in Ann_R(I)$. Thus we have $x, y \not \in Ann_R(I)$.
\end{proof}

\begin{prop}\label{3.11}
Let $I$ be an ideal of $R$. Then the following hold.
 \begin{itemize}
  \item [(a)] Let $x,y$ be distinct elements of $\sqrt{Ann_R(I)} \setminus Ann_R(I)$ with $xy \not\in Ann_R(I)$. Then the ideal $<x,y>$ is not second to $I$.
  \item [(b)] If $I$ is a secondary ideal, then the $diam (\Gamma_{Ann(I)}(R)) \leq 2$.
\end{itemize}
\end{prop}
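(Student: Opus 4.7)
The plan for (a) is to contradict $x\notin Ann_R(I)$ by showing that $I=0$. Suppose $\langle x,y\rangle$ is second to $I$, so $\langle x,y\rangle I = I$. Because $x,y\in\sqrt{Ann_R(I)}\setminus Ann_R(I)$, choose minimal integers $m,n\geq 2$ with $x^m I = 0$ and $y^n I = 0$. Iterating the equation $\langle x,y\rangle I = I$ gives $\langle x,y\rangle^k I = I$ for every $k\geq 1$; take $k=m+n-1$. The ideal $\langle x,y\rangle^{m+n-1}$ is generated by the monomials $x^i y^{k-i}$ with $0\leq i\leq k$, and for each such $i$ either $i\geq m$ or $k-i\geq n$, so every generator annihilates $I$. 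Hence $I = \langle x,y\rangle^{m+n-1} I = 0$, which forces $Ann_R(I)=R$ and contradicts $x\notin Ann_R(I)$. (Incidentally the hypothesis $xy\notin Ann_R(I)$ is not actually needed for this argument.)

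For (b) I would first confine the vertex set of $\Gamma_{Ann(I)}(R)$ to the prime nilradical of $R/Ann_R(I)$, then construct a common neighbour via a maximal-exponent trick. By Lemma \ref{3.7} applied to the $R$-submodule $I$ of $R$ one has $W(I)=\sqrt{Ann_R(I)}$, and combining this with Lemma \ref{3.2}(b) and the vertex definition of $\Gamma_{Ann(I)}(R)$ yields
\[
V(\Gamma_{Ann(I)}(R)) \subseteq \sqrt{Ann_R(I)}\setminus Ann_R(I),
\]
so every vertex is nilpotent modulo $Ann_R(I)$. Take distinct vertices $a,b$; if $ab\in Ann_R(I)$ they are already adjacent, so assume $ab\notin Ann_R(I)$. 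Consider
\[
S := \{(p,q)\in\mathbb{Z}_{\geq 0}^{2} : a^p b^q \in Ann_R(I)\},
\]
an upward-closed set that contains $(m,0)$ and $(0,n)$ for some $m,n\geq 2$. Its complement is therefore a nonempty finite order ideal and admits a coordinate-wise maximal element $(p,q)$. Put $c:=a^p b^q$. By maximality $ac = a^{p+1}b^q\in Ann_R(I)$ and $bc = a^p b^{q+1}\in Ann_R(I)$, while $c\notin Ann_R(I)$, so $c$ is a vertex of $\Gamma_{Ann(I)}(R)$ adjacent to both $a$ and $b$.

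The one place care is needed is to rule out $c=a$ and $c=b$; this is exactly where the case assumption $ab\notin Ann_R(I)$ enters. If $c=a$, multiplying $a^p b^q = a$ by $b$ gives $ab = a^p b^{q+1} \in Ann_R(I)$, contradicting the current case; $c=b$ is symmetric. Hence $a$-$c$-$b$ is a path of length two, establishing $diam(\Gamma_{Ann(I)}(R))\leq 2$. Conceptually the whole argument is powered by secondarity: Lemma \ref{3.7} collapses the zero-divisors of $R/Ann_R(I)$ into the prime nilradical, which is exactly what makes the maximal-exponent choice of $c$ possible.
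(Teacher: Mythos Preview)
Your proofs are correct. For (a) your argument is genuinely different from the paper's and in fact cleaner: the paper fixes the least $n$ with $x^{n}y\in Ann_R(I)$ (using $xy\notin Ann_R(I)$ to force $n\ge 2$), then the least $m$ with $x^{n-1}y^{m}\in Ann_R(I)$, and obtains the contradiction $0=x^{n-1}y^{m-1}\langle x,y\rangle I=x^{n-1}y^{m-1}I\ne 0$. Your pigeonhole on the monomial generators of $\langle x,y\rangle^{m+n-1}$ is shorter and, as you note, does not need the hypothesis $xy\notin Ann_R(I)$. The trade-off is that the paper's version of (a) manufactures, as a by-product, the explicit monomial $x^{n-1}y^{m-1}$, and part (b) of the paper simply reuses this element as the common neighbour in the path $x\,-\,x^{n-1}y^{m-1}\,-\,y$. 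Since your (a) does not produce such a witness, you rebuild it in (b) via the upward-closed set $S$ and a maximal $(p,q)$ in its complement; this is exactly the paper's sequential least-exponent choice in different packaging. Your explicit verification that $c\neq a,b$ (using the standing assumption $ab\notin Ann_R(I)$) is a point the paper leaves implicit.
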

\begin{proof}
(a) Let ideal $<x,y>$ be second to $I$. Since $x, y \in \sqrt{Ann_R(I)} \setminus Ann_R(I)$, there exists the least positive integer $n$ such that $x^ny \in Ann(I)$. As $xy \not \in Ann_R(I)$, we have $n \geqslant 2$. Let $m$ be the least positive such that $x^{n-1}y^m \in Ann_R(I)$. Now clearly $m \geqslant 2$ because $x^{n-1}y \not \in Ann_R(I)$. This yields that the contradiction
$$
0=x^{n-1}y^{m-1}(x, y)I = x^{n-1}y^{m-1}I\not =0.
$$

(b) If $I$ is secondary, then $W(I) = \sqrt{Ann_R(I)}$ by Lemma \ref{3.8}. Choose two distinct vertices $x, y$ in $\Gamma_{Ann(I)}(R)$. If $xy \in Ann_R(I)$, then $d(x, y) = 1$. So we assume that $xy \not \in Ann_R(I)$. Then by Proposition \ref{2.13} (a) and Lemma \ref{3.2}, $x, y \in W(I)\setminus Ann_R(I)$. Also we have $x, y \in \sqrt{Ann_R(I)}\setminus Ann_R(I)$ by Theorem \ref{3.8}. As in the proof of (a), we have the path $x- x^{n-1}y^{m-1}- y$ from $x$ to $y$ in $\Gamma_{Ann(I)}(R)$. Hence $d(x, y) = 2$. Therefore, $diam(\Gamma_{Ann(I)}(R)) \leq 2$.
\end{proof}

\bibliographystyle{amsplain}

\end{document}